\patchcmd\Gread@eps{\@inputcheck#1 }{\@inputcheck"#1"\relax}{}{}
\newtheorem{theorem}{Theorem}[section]
\newtheorem{proposition}[theorem]{Proposition}
\newtheorem{observation}[theorem]{Observation}
\newtheorem{corollary}[theorem]{Corollary}
\newtheorem{remark}[theorem]{Remark}
\newcommand{\qed}{\hfill $\square$\medskip}
\begin{document}

\title{ Stability of $2$-domination number of a graph }

\author{ M. Mehraban  and S. Alikhani$^{}$\footnote{Corresponding author} 
}

\date{\today}

\maketitle

\begin{center}
	
Department of Mathematical Sciences, Yazd University, 89195-741, Yazd, Iran

	\bigskip
	{\tt  Mazharmehraban2020@gmail.com, ~~alikhani@yazd.ac.ir  
		}

\end{center}

 	\begin{abstract}
 		This paper delves into the stability of the $2$-domination number in simple undirected graphs. The $2$-domination number of a graph $G$, $\gamma_2(G)$, represents the minimum size of a vertex subset where every other vertex in the graph is adjacent to at least two members of the subset. We define the $2$-domination stability, $st_{\gamma_2}(G)$, as the smallest number of vertices whose removal causes a change in $\gamma_2(G)$. Our primary contributions include computing this parameter for specific graphs, establishing various bounds for this stability and determining its behavior under certain graph operations combining two graphs.
 		 	\end{abstract}
 
\noindent{\bf Keywords:} dominating set, $2$-domination number,  stability, operation.

\medskip
\noindent{\bf AMS Subj.\ Class.}:  05C05, 05C69.
\section{Introduction}
 Let $G=(V,E)$ be a simple graph with finite number of vertices. 
  The open neighborhood of a vertex $v\in V(G)$  is the set of vertices that are adjacent to $v$, but not including $v$ itself, $N(v) = \{u \in V(G) : uv \in E(G)\}$ and the closed neighborhood of a vertex $v\in V(G)$ is the open neighborhood of $v$ along with the vertex $v$ itself and is denoted as $N[v] = N(v) \cup \{v\}.$
   For a set $S\subseteq V(G)$, the open neighborhood of $S$ is $N(S)=\bigcup_{v\in S} N(v)$ and  the closed neighborhood of $S$
   is $N[S]=N(S)\cup S$. The private neighborhood $pn(v,S)$ of $v\in S$ is defined by $pn(v,S)=N(v)-N(S-\{v\})$, equivalently, $pn(v,S)=\{u\in V| N(u)\cap S=\{v\}\}$.
   The degree of a vertex $v$  denoted as $deg(v)$, is the number of edges incident to that vertex which is equal to $|N(v)|$.  A leaf of a tree is a vertex of degree 1.  A subset \( D \subseteq V(G) \) is called a dominating set of \( G \) if every vertex \( v \in V \setminus D \) has at least one neighbor in \( D \); that is,
   $N(v) \cap D \neq \emptyset \quad \text{for all } v \in V \setminus D$.
   The domination number of \( G \), denoted by \( \gamma(G) \), is the minimum cardinality of a dominating set of \( G \) (see \cite{8,9}).
   A domination-critical vertex in a graph $G$ is a vertex whose
   removal decreases the domination number. It is easy to observe that for any graph $G$ we have $\gamma(G)-1\leq\gamma(G+e)\leq\gamma(G)$ for every edge $e\notin E(G)$.
    Sumner and Blitch in \cite{14} have defined domination critical graphs. A graph G is said to be domination
   critical, or $\gamma$-critical, if $\gamma(G+e)=\gamma(G)-1$ for every edge $e$ in the complement $G^c$ of
   $G$. A graph is said to be domination stable, or $\gamma$-stable, if $\gamma(G)=\gamma(G+e)$ for every
   edge $e$ in the complement $G^c$ of $G$. For detailed information and results regarding the concept of domination critical graphs, we refer interested readers to the papers \cite{5, 14, 15}. Bauer et. al introduced the concept of domination stability in graphs in $1983$ \cite{3}. After then, was studied by Rad, Sharifi and Krzywkowski in \cite{11}. Stability for different types of domination parameters has been investigated in the literature, for example, in \cite{2, 6, 10, 12, wcds}. This subject has considered and studied for another
   graph parameters. For example see \cite{saeid, saeid2,4}.
   
	A subset \( D \subseteq V(G) \) is called a 2-dominating set of the graph \( G \) if every vertex \( v \in V \setminus D \) has at least two neighbors in \( D \); that is,
		$|N(v) \cap D| \geq 2 \quad \text{for all } v \in V \setminus D$.
		The 2-domination number of \( G \), denoted by \( \gamma_2(G) \), is the minimum cardinality of a 2-dominating set in \( G \):
		$\gamma_2(G) = \min \left\{ |D| \,\middle|\, D \subseteq V(G), \forall v \in V \setminus D,\, |N(v) \cap D| \geq 2 \right\}$. For more details see \cite{M1}.
 		We define the stability of $2$-domination number of the graph \( G \),  \( st_{\gamma_2}(G) \),  as the minimum number of vertices that must be removed from \( G \) in order to change its $2$-domination number. 
 		
	\medskip
 	In the next section, we compute the value of stability of $2$-domination number for some special classes of graphs. We find some bounds on the stability of $2$-domination number in Section 3. The stability of $2$-domination number of some operations of two graphs is studied in Section 4.  Finally, we conclude
 	the paper in Section 5. 
 	
\section{Stability of 2-domination number of certain  graphs}

	In this section, we compute the stability of  $2$-domination number  of the graph \( G \) for some specific graphs. 
	
	\subsection{Results for specific graphs} 
	
	We start with the following observation:
	
\begin{observation}{\rm \cite{M1}} 
	 If $P_n$ and $C_n$ are the Path graph and the cycle graph of order $n\geq 4$, then 
	\begin{enumerate} 
		 	\item[(i)] 
	 		 \[\gamma_2(P_n)=
		\begin{cases}
		\frac{n}{2} + 1, & \text{if } n \text{ is even}, \\
		\frac{n - 1}{2} + 1, & \text{if } n \text{ is odd}.
		\end{cases}
		\]
		\item[(ii)] 
				 \[
		\gamma_2(C_n) =
		\begin{cases}
		\frac{n}{2}, & \text{if } n \text{ is even}, \\
		\frac{n+1}{2}, & \text{if } n \text{ is odd}.
		\end{cases}
		\]
	\end{enumerate} 
\end{observation}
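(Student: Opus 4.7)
The plan is to prove both parts by matching lower and upper bounds, starting from a single structural observation that applies uniformly to $P_n$ and $C_n$: in either graph every vertex has degree at most $2$, so any vertex $v \notin D$ must have both of its neighbors inside any $2$-dominating set $D$. Consequently the binary indicator sequence of $D$ along the path (respectively, the cycle), with a $1$ in position $i$ iff $v_i \in D$, contains no two consecutive $0$s. For $P_n$ this constraint is strengthened at the endpoints: since $v_1$ and $v_n$ each have only one neighbor, they cannot be $2$-dominated from outside $D$ and must therefore belong to every $2$-dominating set.

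For part (i), I would convert these structural conditions into a counting estimate. If $|D|=k$ and the indicator string of length $n$ begins and ends with $1$, then the $k-1$ internal gaps between consecutive $1$s can absorb at most one $0$ each, so $n \le 2k-1$ and hence $k \ge \lceil (n+1)/2 \rceil$. Rewriting this ceiling as $n/2+1$ for even $n$ and $(n-1)/2+1$ for odd $n$ matches the claimed formula. The matching upper bound is produced by explicit constructions: the set $\{v_1,v_3,v_5,\ldots,v_n\}$ for odd $n$, and $\{v_1,v_2,v_4,v_6,\ldots,v_n\}$ for even $n$; in each case a one-line check on the interior vertices confirms that every $v_i \notin D$ has both of its neighbors in $D$.

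For part (ii), the endpoint constraint disappears but the ``no two consecutive $0$s'' rule now holds cyclically, so the $k$ cyclic gaps between consecutive $1$s each contain at most one $0$, forcing $n \le 2k$ and thus $k \ge \lceil n/2 \rceil$. Tightness is witnessed by $\{v_1,v_3,\ldots,v_{n-1}\}$ of size $n/2$ in the even case and by $\{v_1,v_3,\ldots,v_n\}$ of size $(n+1)/2$ in the odd case, where each omitted (even-indexed) vertex has both of its neighbors in $D$ by construction.

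The only delicate part is the parity bookkeeping around the ceiling functions and, in the odd cycle case, noticing that having $v_1$ and $v_n$ simultaneously in $D$ (they are adjacent in $C_n$) is harmless since the constraint forbids only consecutive \emph{zeros}, not consecutive ones; neither issue constitutes a substantive obstacle, so I expect the proof to be short and essentially combinatorial.
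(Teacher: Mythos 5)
Your proof is correct. Note that the paper itself gives no argument for this statement---it is recorded as an observation cited from \cite{M1}---so there is no in-paper proof to compare against. Your self-contained argument is sound: the key structural point (in a graph of maximum degree $2$, any vertex outside a $2$-dominating set must have \emph{both} neighbors in it, and degree-$1$ vertices must lie in every such set) correctly reduces the problem to counting binary strings with no two consecutive zeros, fixed at $1$ on both ends for the path and read cyclically for the cycle. The gap counts ($k-1$ internal gaps giving $n\le 2k-1$ for $P_n$, and $k$ cyclic gaps giving $n\le 2k$ for $C_n$) yield exactly the claimed lower bounds $\lceil (n+1)/2\rceil$ and $\lceil n/2\rceil$, and your explicit sets achieve them; I verified the constructions on small cases and the parity arithmetic checks out. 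The only cosmetic point is that in the path lower bound you implicitly use $k\ge 2$ so that ``$k-1$ internal gaps'' makes sense, which is guaranteed since $v_1,v_n\in D$ and $n\ge 4$; you may wish to say this explicitly.
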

We obtain the stability of $2$-domination number of some specific graphs. 
\begin{proposition}
	\begin{enumerate} 
		\item[(i)] 
		For $n\geq 4$, $st_{\gamma_2}(P_n)=3.$
	 \item[(ii)]
	 	$st_{\gamma_2}(C_n)=\left\{
	 	\begin{array}{cc}
	 	2    &\quad if\; n\ is\; odd,\\
	 	3    &\quad if\; n\ is\;  even.\\
	 	\end{array}\right.$
	 	
	 	\item[(iii)] 
	 		If $W_n$ is a wheel graph (join of $K_1$ and $C_{n-1}$, i.e., $K_1\vee C_{n-1}$), then for $n\geq 5$, $st_{\gamma_2}(W_n) = 2$.  
	\end{enumerate}
	\end{proposition}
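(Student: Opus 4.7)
The plan is, for each item in the proposition, to pair an upper bound with a matching lower bound on $st_{\gamma_2}$. The upper bound is proved by exhibiting a vertex subset of the claimed size whose removal demonstrably changes $\gamma_2$, and the lower bound by a case analysis showing that no strictly smaller subset can.

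For (i), the upper bound comes from deleting three consecutive vertices from one end of $P_n$, producing $P_{n-3}$, and invoking the observation to compare $\gamma_2(P_{n-3})$ with $\gamma_2(P_n)$. For the lower bound, I would enumerate how removal of a single vertex $v_i$ decomposes $P_n$ into $P_{i-1}\cup P_{n-i}$, use the additivity $\gamma_2(G_1\cup G_2)=\gamma_2(G_1)+\gamma_2(G_2)$, and then check by the parities of $i-1$ and $n-i$ that the resulting sum matches $\gamma_2(P_n)$. Two-vertex removals are treated analogously, splitting the path into at most three components, so the check reduces to a short parity table.

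For (ii), the key identity is $C_n-v=P_{n-1}$, so a single-vertex removal always produces a graph whose $2$-domination number is $\lfloor (n-1)/2\rfloor +1$, to be compared with $\lceil n/2\rceil$. When $n$ is odd these values coincide, but removing two adjacent cycle vertices yields $P_{n-2}$ of the opposite parity and strictly smaller $\gamma_2$, establishing $st_{\gamma_2}(C_n)=2$. When $n$ is even, both one-vertex and two-vertex removals turn out to preserve $\gamma_2$; the two-vertex case requires checking each distance class between the removed vertices (adjacent vs.\ every non-adjacent distance up to the diameter), since non-adjacent removals split the cycle into two paths whose individual $\gamma_2$'s must be summed and whose parities must be tracked. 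A three-vertex removal (three consecutive cycle vertices) then gives $P_{n-3}$, whose $\gamma_2$ differs.

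For (iii), I would first determine $\gamma_2(W_n)$ by splitting on whether the hub lies in an optimal $2$-dominating set; this reduces the problem to bounding a minimum dominating set of $C_{n-1}$ (hub in $D$) and a minimum $2$-dominating set of $C_{n-1}$ (hub not in $D$), and taking the better of the two. For the upper bound $st_{\gamma_2}(W_n)\le 2$ one removes two suitably chosen cycle vertices, obtaining a fan $K_1\vee P_{n-3}$ whose $2$-domination number is then shown to differ from $\gamma_2(W_n)$. For the lower bound, one verifies that removing the hub alone yields $C_{n-1}$ with the same $\gamma_2$, and that removing a single cycle vertex yields the fan $K_1\vee P_{n-2}$ with the same $\gamma_2$ as $W_n$. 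The main obstacle throughout is the lower-bound case analysis: in (i) and (ii) one must juggle parities across up to three disjoint path components, while in (iii) the argument relies on an auxiliary short calculation of $\gamma_2$ on fan graphs (not directly stated in the observation) to prove that single-vertex removals of $W_n$ leave the $2$-domination number unchanged.
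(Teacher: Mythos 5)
Your overall strategy --- an explicit deletion for the upper bound paired with an exhaustive analysis of all smaller deletions via the additivity of $\gamma_2$ over components --- is the natural one, and it is actually more than the paper supplies: the paper only exhibits the deletion sets and asserts, without argument, that no smaller deletion changes $\gamma_2$. The difficulty is that the parity checks you defer to do not come out the way you claim; carried out honestly, they refute the lower bounds. For (i), using $\gamma_2(P_m)=\lfloor m/2\rfloor+1$ for $m\geq 1$ (so $\gamma_2(P_2)=2$, $\gamma_2(P_3)=2$), deleting an end-vertex of $P_n$ with $n$ even leaves $P_{n-1}$ with $\gamma_2=n/2\neq n/2+1$, and for odd $n\geq 5$ deleting $v_3$ leaves $P_2\cup P_{n-3}$ with $\gamma_2=2+\tfrac{n-3}{2}+1=\tfrac{n+3}{2}\neq\tfrac{n+1}{2}$; hence $st_{\gamma_2}(P_n)=1$ for every $n\geq 4$, not $3$. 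For (ii) with $n$ even, your distance-class check also fails: deleting two vertices whose two arcs both have even length (e.g.\ $v_1,v_4$ in $C_6$) yields two even paths with total $\gamma_2=\tfrac{n-2}{2}+2=\tfrac{n}{2}+1\neq \tfrac{n}{2}$, so $st_{\gamma_2}(C_n)=2$ in the even case as well. Only the odd-cycle case (single deletions give $P_{n-1}$ with unchanged $\gamma_2$, two adjacent deletions give $P_{n-2}$ with smaller $\gamma_2$) survives, and there your argument agrees with the paper's.

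For (iii) your plan of splitting on whether the hub lies in $D$ gives $\gamma_2(W_n)=\min\{\lceil(n-1)/2\rceil,\,1+\lceil(n-1)/3\rceil\}$; this already conflicts with the quoted value $\lfloor(n+1)/3\rfloor+1$ at $n=5$, where $\{v_1,v_3\}\subseteq C_4$ is a $2$-dominating set of size $2$. More importantly, your intended verification that deleting the hub preserves $\gamma_2$ fails for larger $n$: $\gamma_2(W_n-h)=\gamma_2(C_{n-1})=\lceil(n-1)/2\rceil$, which exceeds $1+\lceil(n-1)/3\rceil$ already at $n=10$ (there $\gamma_2(W_{10})=4$ but $\gamma_2(C_9)=5$), so $st_{\gamma_2}(W_n)=1$ for such $n$. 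In short, your method is sound and would settle the question, but the case analyses cannot be completed as described because the stated equalities (and the paper's one-line justifications of them) are themselves incorrect outside the odd-cycle case; what your computation actually establishes is a different set of values.
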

\begin{proof} 
	\begin{enumerate} 
		\item[(i)] 
		By removing the three first consecutive vertices of $P_n$, the $2$-domination number of $P_n$ will be changes but by removing two vertices, this parameter does not change. So we have the result.

	\item[(ii)] 
	Suppose that $V(C_n)=\{v_1,v_2,\dots,v_n\}$. For odd $n$, by removing
	 two consecutive vertices $\{v_1,v_2\}$, we will have $P_{n-2}$ which  its $2$-domination number is one less than the $2$-domination number of $C_n$.
	 For even $n$, we need to remove three consecutive vertices $\{v_1,v_2,v_3\}.$   
	 \item[(iii)] 
	 		Let $W_n=K_1\vee C_{n-1}$. By removing two adjacent vertices of $C_{n-1}$ the $2$-domination number of $W_n$ which is 	$\lfloor \frac{n+1}{3} \rfloor + 1$ (see \cite{M1}) will be changed. \qed
	 		\end{enumerate} 
\end{proof}

Now we obtain the stability of $2$-domination number of friendship graph and book graph. The friendship graph $F_n$ is a graph that can be constructed by coalescence $n$ copies of the cycle graph $C_3$ of length $3$ with a common vertex. The friendship graph $F_n$ is a graph with the property that every two vertices have exactly one neighbor in common are exactly the friendship graphs \cite{erdos}.
The $n$-book graph $(n\geq2)$ is defined as the Cartesian product $K_{1,n}\square P_2$. We call every $C_4$ in the book graph $B_n$, a page of $B_n$. All pages in $B_n$ have a common side $v_1v_2$.   Figure \ref{friend} shows the  friendship graph $F_n$ and the book graph $B_n$.

  \begin{figure}[ht]
  	\hspace{1cm}
  	\begin{minipage}{6.3cm}
  		\includegraphics[width=\textwidth]{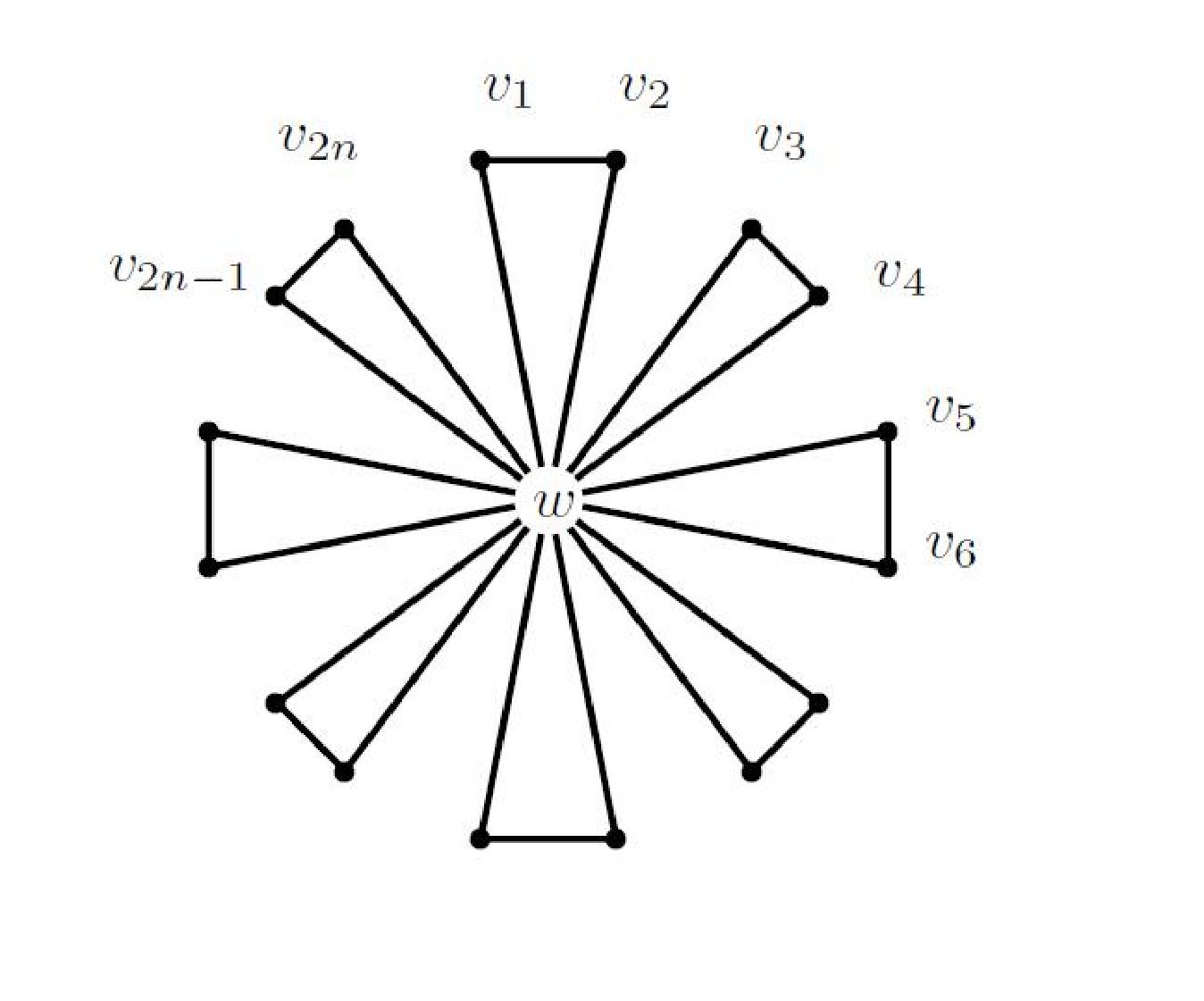}
  	\end{minipage}
  	\begin{minipage}{6.1cm}
  		\includegraphics[width=\textwidth]{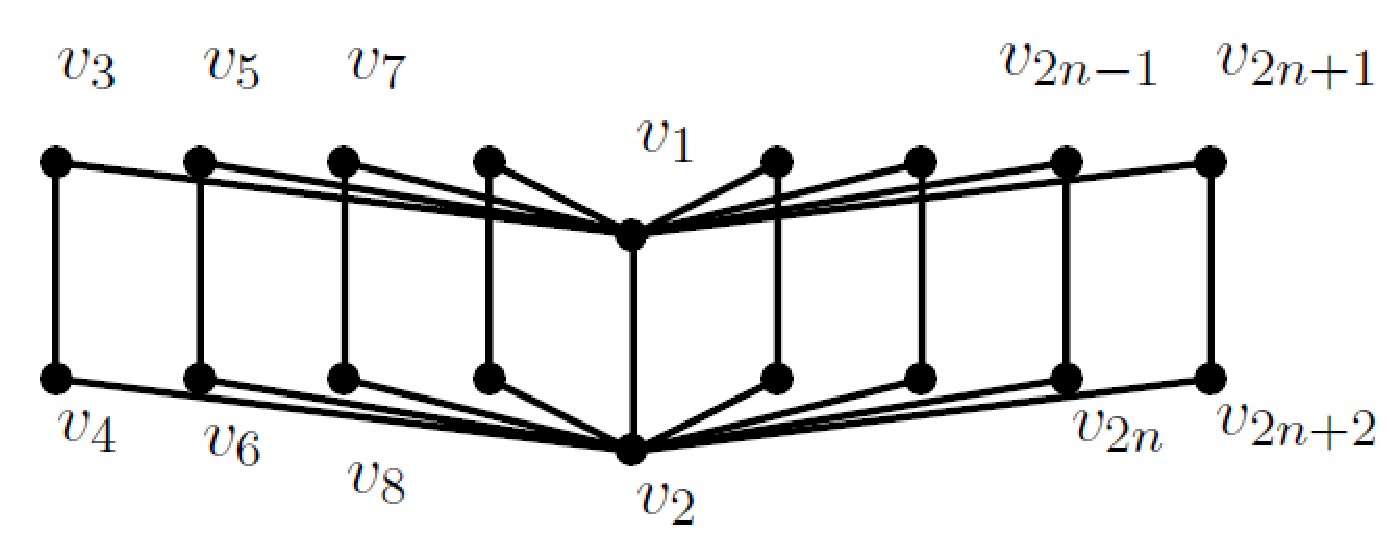}
  	\end{minipage}
  	\caption{\label{friend} Friendship graph $F_n$ and book graph $B_n$,  respectively.}
  \end{figure}
  
  The following observation gives the $2$-domination number of the friendship graph and  the book graph.
  
\begin{observation} {\rm \cite{M1}} 
	\begin{enumerate}
	\item[(i)] For $n\geq 1$, $\gamma_2(F_n)=n+1$.
		
		\item[(ii)]
		 For $n\geq 2$, $\gamma_2(B_n)=n+1$. 
		\end{enumerate} 
\end{observation}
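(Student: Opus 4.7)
The plan is to verify both equalities by exhibiting an explicit 2-dominating set of size $n+1$ and then matching it with a lower bound obtained from a short case analysis on how an optimal 2-dominating set $D$ intersects the small collection of high-degree vertices.

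For part (i), let $c$ be the central vertex of $F_n$ and label the triangles $\{c, a_i, b_i\}$, with edge $a_i b_i$ for each $i$. The set $\{c, a_1, \ldots, a_n\}$ is 2-dominating, since each excluded $b_i$ has both of its only neighbors $c$ and $a_i$ inside, giving $\gamma_2(F_n) \le n+1$. For the lower bound I would split on whether $c \in D$. If $c \in D$, then for every $i$ the pair $a_i, b_i$ cannot both lie outside $D$, because the neighbor set $\{c, b_i\}$ of $a_i$ would meet $D$ in just one vertex; hence each triangle contributes at least one further vertex and $|D| \ge n+1$. If $c \notin D$, then any $a_i \notin D$ would force $\{c, b_i\} \subseteq D$, which is impossible, so all $2n$ peripheral vertices join $D$ and $|D| \ge 2n \ge n+1$ for $n \ge 1$.

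For part (ii), write the spine edge of $B_n$ as $v_1 v_2$ and describe page $i$ as the 4-cycle $v_1 a_i b_i v_2$, so that each $a_i$ and each $b_i$ has degree $2$. The set $\{v_1, b_1, \ldots, b_n\}$ has size $n+1$ and 2-dominates, because each $a_i$ has both neighbors $v_1$ and $b_i$ in $D$, while $v_2$ sees $v_1$ together with every $b_i$. For the lower bound I would split on $|D \cap \{v_1, v_2\}|$: if this intersection is empty, then every $a_i$ and every $b_i$ must itself belong to $D$, since its only other neighbor is a spine, giving $|D| \ge 2n$; if both spines lie in $D$, each page still needs at least one of $a_i, b_i$ in $D$, yielding $|D| \ge n+2$; and if exactly one spine, say $v_1$, is in $D$, then every $b_i \notin D$ would need its two neighbors $v_2, a_i$ both in $D$, which is impossible, so every $b_i$ is forced into $D$ and $|D| \ge n+1$.

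The routine steps are the upper-bound constructions and the inequalities $2n \ge n+1$ and $n+2 \ge n+1$. The one place that deserves care is the mixed spine case in (ii), where one must chase the forced inclusions precisely: the low degree of $a_i$ and $b_i$ leaves only one legitimate configuration per page, and verifying that this is tight ensures that no smaller 2-dominating set can be hidden in a more exotic split between the two spines.
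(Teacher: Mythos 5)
Your proof is correct and complete. Note that the paper states this result only as an observation cited to the reference [M1] (Movahedi, Akhbari, Alikhani) and supplies no argument of its own, so there is no in-paper proof to compare against; your explicit constructions $\{c,a_1,\dots,a_n\}$ for $F_n$ and $\{v_1,b_1,\dots,b_n\}$ for $B_n$, together with the case analysis on whether the centre (resp.\ the spine vertices $v_1,v_2$) lies in an optimal set $D$, give a self-contained verification. All the forced-inclusion steps check out: in $F_n$ the degree-$2$ peripheral vertices leave no slack, and in $B_n$ the mixed-spine case correctly forces all $b_i$ (or, symmetrically, all $a_i$) into $D$, with the remaining cases dominated by $2n\ge n+1$ and $n+2\ge n+1$.
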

The following proposition gives the stability of $2$-domination number of $F_n$ and $B_n$. 

\begin{proposition}
	\begin{enumerate}
		\item[(i)]
For any $n\geq 2$, $st_{\gamma_2}(F_n)=1$.  
	\item[(ii)] 
	For any $n$,  $st_{\gamma_2}(B_n)=1$. 
		\end{enumerate} 
\end{proposition}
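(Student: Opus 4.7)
The overall plan is to prove both equalities by producing, in each case, a single vertex whose deletion strictly changes $\gamma_2$; since removing zero vertices cannot alter $\gamma_2(G)$, the trivial bound $st_{\gamma_2}(G) \geq 1$ then forces equality as soon as one good vertex is exhibited.

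For (i), I would delete the apex $c$ of $F_n$, the unique vertex shared by all $n$ triangles. The resulting graph $F_n - c$ is $nK_2$, a disjoint union of $n$ edges. In each $K_2$ component a vertex lying outside a 2-dominating set has only a single neighbor, hence at most one neighbor inside the set, so every vertex of every component must belong to any 2-dominating set. This yields $\gamma_2(F_n - c) = 2n$, and for $n \geq 2$ this differs from $\gamma_2(F_n) = n+1$; the equality $st_{\gamma_2}(F_n) = 1$ follows.

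For (ii), I would apply the same template to $B_n$. The two natural candidates for the vertex to remove are a spine vertex (say $v_1$) and a page-corner vertex (say $a_1$). Deleting $v_1$ turns $B_n$ into a spider with $n$ legs of length two centered at $v_2$, each leg consisting of an internal vertex $b_i$ and a leaf $a_i$. Deleting $a_1$ instead leaves $b_1$ as a pendant attached to $v_2$ and leaves the remaining pages as essentially a smaller book. In either case the strategy is the same: force every pendant/leaf vertex into every 2-dominating set, and then run a case analysis according to whether each of $v_1, v_2$ lies in a minimum 2-dominating set, using the pendant forcing to pin $\gamma_2$ of the reduced graph down to a value distinct from $n+1$.

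The main obstacle in (ii) is that a forced pendant tends to increase $\gamma_2$ by one while the disappearance of a spine vertex or of one page's corner tends to decrease it by one, so the two effects can very nearly cancel; identifying the candidate for which the cancellation fails, and then pushing through the case split on $\{v_1, v_2\} \cap D$ to prove the strict inequality, is the delicate part of the argument. Once this is in hand, comparison with $\gamma_2(B_n) = n+1$ from the observation above completes the proof.
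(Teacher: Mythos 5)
Part (i) of your proposal is correct and is essentially the paper's own argument: deleting the hub $w$ leaves $nK_2$, every vertex of which has degree one and hence is forced into any $2$-dominating set, so $\gamma_2(F_n-w)=2n\neq n+1$ for $n\geq 2$.

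Part (ii) contains a genuine gap, and it is not one you can close: the step you defer (``identifying the candidate for which the cancellation fails'') has no successful outcome, because the cancellation never fails. Up to the automorphisms of $B_n$ there are only two vertices to test, a spine vertex $v_1$ and a page corner $a_1$, and both deletions leave $\gamma_2$ equal to $n+1$. Deleting $v_1$ yields the spider with legs $v_2b_ia_i$; each leaf $a_i$ is forced into every $2$-dominating set, the set $\{a_1,\ldots,a_n\}$ alone leaves $v_2$ with no neighbour inside it, and $\{a_1,\ldots,a_n,v_2\}$ is $2$-dominating, so $\gamma_2(B_n-v_1)=n+1$. Deleting $a_1$ forces the now-pendant $b_1$ into every $2$-dominating set $D$; running your case split on $D\cap\{v_1,v_2\}$, the implications ``$a_i\notin D\Rightarrow v_1,b_i\in D$'' and ``$b_i\notin D\Rightarrow v_2,a_i\in D$'' force at least $n$ further vertices in every case, while $\{b_1,v_2,a_2,\ldots,a_n\}$ attains $n+1$, so $\gamma_2(B_n-a_1)=n+1$ as well.

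Consequently no single-vertex deletion changes $\gamma_2(B_n)$, so $st_{\gamma_2}(B_n)\geq 2$; since deleting both spine vertices leaves $nK_2$ with $\gamma_2=2n\neq n+1$ for $n\geq 2$, the true value is $st_{\gamma_2}(B_n)=2$. In other words, statement (ii) itself is incorrect as written, and the paper's own proof tacitly concedes the point: it removes the \emph{two} vertices $v_1$ and $v_2$, which only establishes $st_{\gamma_2}(B_n)\leq 2$, not $st_{\gamma_2}(B_n)=1$. Your instinct that the pendant-forcing gain and the page-loss saving ``very nearly cancel'' was right; they cancel exactly, and the proposition should be corrected rather than proved.
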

\begin{proof}
		\begin{enumerate}
			\item[(i)] By removing the central vertex (the vertex $w$ in Figure \ref{friend}) the $2$-domination number change. 
			
			\item[(ii)] We have the result by removing two vertices $v_1,v_2$ of $B_n$ (Figure \ref{friend}). \qed
\end{enumerate} 
\end{proof}

\begin{observation} {\rm \cite{M1}} 
	\begin{enumerate}
	\item[(i)]	If $K_n$ is a complete graph for every  $n\geq 2$, $\gamma_2(K_n)=2$.
		
  	\item[(ii)] For $n\geq 3$, $\gamma_2(K_{1,n})=n-1$.
		
		\item[(iii)] $\gamma_2(K_{m,n}) = 
	\begin{cases}
	n - 1, & \text{if } m = 1, \\
	m - 1, & \text{if } n = 1, \\
	4, & \text{if } m, n \geq 2.
	\end{cases}
	$
	\end{enumerate} 
\end{observation}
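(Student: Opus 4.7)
The plan is to handle the three items independently, each by a direct construction paired with a short lower-bound verification from the definition of $\gamma_2$.

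For (i), I first exhibit any two distinct vertices $u,v$ of $K_n$ as a $2$-dominating set: every third vertex $w$ is adjacent to both of $u,v$ by completeness, so $|N(w)\cap\{u,v\}|=2$, giving $\gamma_2(K_n)\leq 2$. The matching lower bound is immediate since a singleton $\{u\}$ cannot supply two neighbours to any vertex outside it.

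For (ii), the structural observation is that every leaf of $K_{1,n}$ has the centre as its unique neighbour, so a leaf $\ell\notin D$ would force $|N(\ell)\cap D|\leq|N(\ell)|=1<2$, violating the $2$-domination condition. Hence every leaf must lie in any $2$-dominating set $D$, and a short bookkeeping step checking whether the centre must also be added (and whether one leaf can be spared once the remaining leaves already supply two neighbours to the centre) then delivers the claimed value.

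For (iii), the approach is a case split on the part sizes. The cases $m=1$ and $n=1$ reduce directly to (ii). When $m,n\geq 2$, the upper bound is obtained by taking two vertices from each partition class: any vertex $v$ outside $D$ lies in exactly one class and sees the two elements of $D$ lying in the opposite class, so $|N(v)\cap D|\geq 2$. The lower bound is the main obstacle and is where I would spend most of my care: writing $a=|D\cap A|$ and $b=|D\cap B|$ with $a+b\leq 3$, symmetry allows one to assume $a\leq 1$, and then any vertex of $B\setminus D$ has at most one neighbour in $D$. The delicate point is confirming the existence of a vertex of $B$ outside $D$ (and likewise for the symmetric role of $A$), which forces one to argue from the hypothesis on the part sizes---so the lower bound reduces to checking that neither part can be entirely absorbed into $D$ by a set of the given size.
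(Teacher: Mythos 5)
The paper gives no proof of this observation (it is quoted from \cite{M1}), so your proposal can only be judged on its own terms, and there it runs into trouble in parts (ii) and (iii). Part (i) is fine. In part (ii), your structural observation is correct --- every leaf $\ell$ satisfies $|N(\ell)|=1$, so no leaf can be left outside a $2$-dominating set --- but under the standard convention that $K_{1,n}$ has $n$ leaves this forces $|D|\geq n$, and the set of all $n$ leaves is itself $2$-dominating (the centre then has $n\geq 2$ neighbours in $D$). So the argument you sketch yields $\gamma_2(K_{1,n})=n$, not $n-1$. The ``bookkeeping step'' about sparing one leaf cannot rescue the claimed value: the obstruction to sparing a leaf is not whether the centre is still covered but that the spared leaf itself has only one neighbour and hence can never be $2$-dominated from outside. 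Your sketch therefore contradicts the statement rather than proving it; you would need to either adopt the convention that $K_{1,n}$ has $n$ vertices (hence $n-1$ leaves) or acknowledge the discrepancy explicitly.

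In part (iii), the ``delicate point'' you flag at the end is not a technicality to be checked --- it is exactly where the claimed lower bound fails for small parts. If $\min\{m,n\}\leq 3$, the smaller part $A$ (say) is itself a $2$-dominating set of size $|A|\leq 3$: every vertex of $B$ is adjacent to all $m\geq 2$ vertices of $A$. For instance $K_{2,2}\cong C_4$ has $\gamma_2(K_{2,2})=2$, which even agrees with the paper's own Observation~2.1(ii) for cycles, and $\gamma_2(K_{3,3})=3$. So the hypothesis $m,n\geq 2$ does not ensure that ``neither part can be entirely absorbed into $D$,'' and the constant $4$ is correct only when $m,n\geq 4$; the true value for $m,n\geq 2$ is $\min\{m,n,4\}$. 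Your upper-bound construction and your parametrisation $a+b\leq 3$ with the WLOG reduction $a\leq 1$ are the right tools, but the lower-bound argument closes only after adding the hypothesis $m,n\geq 4$ (which guarantees $B\setminus D\neq\emptyset$ whenever $|D|\leq 3$). As written, both your proof and the quoted statement are incorrect in the range $2\leq\min\{m,n\}\leq 3$.
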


Now, we state the value of the stability of $2$-domination number of $K_n$, $K_{1,n}$ and $K_{m,n}$ which are easy to obtain.
\begin{proposition}
	\begin{enumerate}
		\item[(i)]
		For every  $n\geq 2$, $st_{\gamma_2}(K_n)=n-1.$
		\item[(ii)] 
		For $n\geq 2$, $st_{\gamma_2}(K_{1,n})=1.$
		\item[(iii)] 
		$st_{\gamma_2}(K_{m,n}) = 
		\begin{cases}
		1, & \text{if } m = 1, \; or\; n=1\,\\
		2, & \text{if } m,n \geq 2.
		\end{cases}$
		\end{enumerate} 
\end{proposition}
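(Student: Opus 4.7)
The plan is to verify each value by combining the explicit $\gamma_2$ values recorded in the preceding observation with the simple effect of a vertex deletion on these three standard families. Since $st_{\gamma_2}(G)\geq 1$ holds automatically, each case reduces to exhibiting a deletion that changes the parameter and, when the claimed stability exceeds $1$, showing that no smaller deletion does so.

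For part (i), I would note that any $k$-vertex deletion in $K_n$ produces $K_{n-k}$, and the observation gives $\gamma_2(K_t)=2$ for every $t\geq 2$. Hence no deletion of size at most $n-2$ changes the value, while deleting exactly $n-1$ vertices leaves $K_1$, whose single vertex has no neighbour and thus must lie in every $2$-dominating set, giving $\gamma_2(K_1)=1\neq 2$. Both bounds follow at once. Part (ii) is even more immediate: removing the centre vertex of $K_{1,n}$ produces $n$ isolated vertices, each forced into any $2$-dominating set, so the resulting value equals $n$, which differs from $\gamma_2(K_{1,n})=n-1$; a single deletion therefore suffices.

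Part (iii) splits at $\min(m,n)=1$, where it reduces directly to part (ii). In the case $m,n\geq 2$ the observation gives $\gamma_2(K_{m,n})=4$, and my plan is to exhibit a two-vertex deletion that changes this value (for instance, removing two vertices from the same partite set, so that the resulting graph is $K_{m-2,n}$ or $K_{m,n-2}$ whose $2$-domination number is no longer $4$ by the same observation), and to argue that no single deletion suffices because $K_{m-1,n}$ or $K_{m,n-1}$ typically still satisfies $\gamma_2=4$. The one delicate point, which I expect to be the main obstacle, is the boundary situation in which a single deletion drops one partite set to size exactly $1$; this sub-case must be dispatched by hand using the star computation from part (ii) to confirm that the relevant single-vertex deletions do indeed preserve the value $4$. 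Combining the upper and lower bounds then yields $st_{\gamma_2}(K_{m,n})=2$.
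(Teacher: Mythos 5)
Parts (i) and (ii) are fine (the paper itself offers no argument for this proposition, so there is nothing to compare against there), but your part (iii) for $m,n\geq 2$ has a genuine gap, and it is not the boundary case you flagged. You propose to delete two vertices from one partite set and conclude that the resulting $K_{m-2,n}$ ``is no longer $4$ by the same observation.'' But by that very observation, $\gamma_2(K_{m-2,n})=4$ whenever $m-2\geq 2$ and $n\geq 2$; so for all $m,n\geq 4$ your exhibited deletion changes nothing, and in fact no deletion of one or two vertices ever leaves a partite set of size $\leq 1$, so taking the observation at face value one would conclude $st_{\gamma_2}(K_{m,n})>2$ in that range. The upper bound, which is the substantive half of the claim, is therefore not established by your argument.

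The deeper problem is that the quoted value $\gamma_2(K_{m,n})=4$ is itself unreliable: a single partite set of size $2$ or $3$ is already a $2$-dominating set, so $\gamma_2(K_{m,n})=\min\{m,n,4\}$ for $m,n\geq 2$ (e.g.\ $\gamma_2(K_{2,2})=\gamma_2(C_4)=2$, consistent with the paper's own formula for cycles). With the correct values the proposition's case $m,n\geq 2$ fails outright: $st_{\gamma_2}(K_{2,3})=1$ (deleting a vertex of the larger side yields $K_{1,3}$ with $\gamma_2=3$), $st_{\gamma_2}(K_{2,2})=3$, and $st_{\gamma_2}(K_{6,6})=3$ (every one- or two-vertex deletion leaves both sides of size at least $4$, hence $\gamma_2=4$ still). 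So this part cannot be repaired by patching your boundary sub-case; the statement itself needs to be restricted or corrected. A similar caveat applies to part (ii): every leaf of $K_{1,n}$ has a unique neighbour and hence lies in every $2$-dominating set, so $\gamma_2(K_{1,n})=n$ rather than $n-1$; your center-deletion then changes nothing, although deleting a leaf still gives $st_{\gamma_2}(K_{1,n})=1$, so the conclusion survives even if your witness does not.
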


\subsection{Results for cactus graphs}

In this subsection, we obtain the stability of $2$-domination number of cactus graphs. 
A cactus graph is a connected graph where each edge belongs to at most one cycle. Therefore, every block of a cactus graph is either a single edge or a cycle. When all blocks in a cactus graph $G$ are cycles of identical length $m$, the graph is called an $m$-uniform cactus.

A triangular cactus is a connected graph where each block is a triangle, making it a 3-uniform cactus. A vertex that belongs to multiple triangles is called a cut-vertex. When every triangle contains at most two cut-vertices, and each cut-vertex is shared by exactly two triangles, the graph forms a chain triangular cactus. The length of this chain is the number of triangles it contains. Such a structure, denoted by $T_n$, has $2n + 1$ vertices and $3n$ edges (\cite{M2}) (see Figure~\ref{cactus}).
By extending this idea to cycles of length four, we define square cacti, where each block is a $C_4$. Chain square cacti, denoted by $Q_n$, vary depending on how internal squares connect to each other (see Figure \ref{cactus}). If the cut-vertices of a square are adjacent, it is called an {ortho-square}; otherwise, it is a {para-square}. The chain consisting solely of para-squares is denoted by $Q_n$, while the chain formed by ortho-squares is denoted by $O_n$ (illustrated in Figure~\ref{cactus}).

 \begin{figure}[ht]
 	\hspace{1cm}
 	\begin{minipage}{4.4cm}
 		\includegraphics[width=\textwidth]{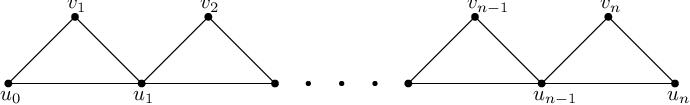}
 	\end{minipage}
 	\begin{minipage}{4.4cm}
 		\includegraphics[width=\textwidth]{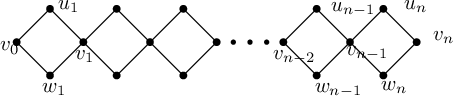}
 	\end{minipage}
 	\begin{minipage}{4.4cm}
 		\includegraphics[width=\textwidth]{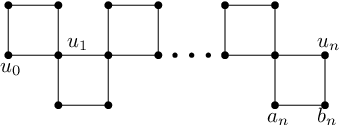}
 	\end{minipage}
 	\caption{\label{cactus} The cactus  $T_n$, $Q_n$ and $O_n$,  respectively.}
 \end{figure}

\begin{observation}
	\begin{enumerate}
\item[(i)] 
 $\gamma_2(T_n)=\gamma_2(Q_n)=\lceil\frac{n+2}{2}\rceil$.
 \item[(ii)] 
 For the ortho-chain square cactus graph, $\gamma_2(O_n)=n+1$.
 \end{enumerate} 
\end{observation}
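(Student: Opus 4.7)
My plan is to prove both equalities by establishing matching lower and upper bounds, handling each graph family with a block-by-block analysis. First I would fix notation: label the blocks along the chain as $B_1,\dots,B_n$, let $c_i$ denote the cut-vertex shared by $B_i$ and $B_{i+1}$ (for $1\le i\le n-1$), and label the remaining (degree-$2$) non-cut vertices within each block. The key observation driving every argument is that a non-cut vertex $v$ of degree $2$ must either lie in any $2$-dominating set $D$ or have both of its neighbors in $D$.

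For the upper bounds I would exhibit explicit $2$-dominating sets. For $T_n$ and $Q_n$, I would take an alternating selection of cut-vertices (for instance $c_1,c_3,c_5,\dots$) and supplement it with appropriate vertices from the two end blocks; the count $\lceil (n+2)/2\rceil$ should emerge from taking roughly every second cut-vertex, plus two unavoidable end contributions. For $O_n$, I would take the full set of cut-vertices $\{c_1,\dots,c_{n-1}\}$ together with one carefully chosen non-cut vertex from each end square, producing a set of size $(n-1)+2=n+1$; verification then reduces to noting that in each ortho-square the two adjacent cut-vertices jointly cover the opposite pair of degree-$2$ vertices.

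For the lower bounds I would combine a local charging argument on each block with a global counting step. For each block $B$, I would show by analyzing the forced inclusions coming from the degree-$2$ constraints inside $B$ that $|D\cap V(B)|$ is at least a certain value depending on which adjacent cut-vertices lie in $D$. Summing these contributions, while counting shared cut-vertices only once, should yield the required lower bound. A short induction on $n$ also seems natural, with base case $n=1$ (where $T_1=C_3$ and $Q_1=O_1=C_4$ are already handled by Observation~2.4).

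The step I expect to be the main obstacle is proving the lower bound tightly for $T_n$ and $Q_n$: the ceiling in $\lceil (n+2)/2\rceil$ reflects the possibility that two consecutive blocks can effectively ``share'' a single vertex of $D$, and making this sharing rigorous requires a careful inductive or discharging argument rather than a naive per-block count. By contrast, the linear formula $n+1$ for $O_n$ should follow from a simpler argument showing that each internal ortho-square contributes at least one cut-vertex to $D$, together with the two unavoidable extras from the end squares.
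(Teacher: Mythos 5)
The paper states this as an observation with no proof, so your plan has to stand on its own; unfortunately it cannot, because the verification steps you defer would fail, and the failures are symptoms of the formulas themselves being wrong. For $T_n$ and $Q_n$, the alternating selection $c_1,c_3,c_5,\dots$ of cut-vertices is not a $2$-dominating set: the degree-$2$ vertex of an internal triangle of $T_n$ has neighbourhood exactly $\{c_{i-1},c_i\}$, so the moment you skip one of these two cut-vertices that vertex must be placed in $D$ itself, and you cannot drop every other cut-vertex. Pushing through the forced-inclusion analysis you correctly identify as the key tool (each end block forces at least two vertices, each internal degree-$2$ vertex forces itself or both of its cut-vertex neighbours) yields $\gamma_2(T_n)=\gamma_2(Q_n)=n+1$, not $\lceil (n+2)/2\rceil$. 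The quickest sanity check is $n=2$: $T_2$ is the friendship graph $F_2$, for which the paper's own Observation~2.3 gives $\gamma_2(F_2)=3$, whereas $\lceil(2+2)/2\rceil=2$; likewise $\gamma_2(Q_2)=3$.

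For $O_n$ your verification rests on a false premise. In an ortho-square the two cut-vertices are adjacent to \emph{each other}, so each of the remaining two degree-$2$ vertices is adjacent to exactly one cut-vertex (its other neighbour being the other degree-$2$ vertex). Hence the two cut-vertices do not ``jointly cover the opposite pair'' in the sense required for $2$-domination: a degree-$2$ vertex outside $D$ whose square meets $D$ only in its cut-vertices has a single neighbour in $D$. (What you describe is the covering condition for ordinary domination, not $2$-domination.) It follows that every square of $O_n$ must contribute at least one of its two non-cut degree-$2$ vertices to $D$, and these contributions are disjoint from the at-least-two vertices forced by each end square; already for $n=3$ this gives $\gamma_2(O_3)\ge 2+1+2=5>4=n+1$, with equality attained by taking the middle vertex and cut-vertex of each end square plus one degree-$2$ vertex of the central square. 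So before attempting a proof you should recompute all three values: your block-by-block forcing argument is the right instrument, but it leads to $n+1$ for $T_n$ and $Q_n$ and to a strictly larger, roughly $\tfrac{3n}{2}$-sized, answer for $O_n$, not to the formulas stated.
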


The following theorem gives the stability of $2$-domination number of these three cactus which is easy to obtain. 

\begin{proposition}
For $n\geq 2$,		$st_{\gamma_2}(T_n)=st_{\gamma_2}(Q_n)=st_{\gamma_2}(O_n)=2$.
		\end{proposition}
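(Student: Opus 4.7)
\noindent The plan is to prove, for each of $T_n$, $Q_n$, and $O_n$, both inequalities $st_{\gamma_2} \le 2$ and $st_{\gamma_2} \ge 2$. The three families share a common block-chain structure (triangles for $T_n$, $4$-cycles for $Q_n$ and $O_n$, glued at cut-vertices), so any $2$-dominating set decomposes naturally along the blocks, and the role of a vertex (end non-cut, interior non-cut, or cut-vertex) determines its contribution; I would therefore handle all three families by parallel arguments.

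For the upper bound, I would exhibit in each family an explicit pair of vertices whose removal changes $\gamma_2$. For $T_n$ the natural candidate is the two non-cut vertices of an end triangle: the result is $T_{n-1}$, and the formula $\gamma_2(T_n)=\lceil(n+2)/2\rceil$ from the preceding observation shows the value strictly drops for odd $n$. For even $n$, where that shift preserves the formula, I would instead remove the end cut-vertex together with one of its non-cut neighbours, so that the end triangle effectively disappears and the count must change. The analogous construction works for $Q_n$ via $\gamma_2(Q_n)=\lceil(n+2)/2\rceil$, and for $O_n$ the linear formula $\gamma_2(O_n)=n+1$ makes any such terminal two-vertex collapse strictly decreasing without parity subcases.

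For the lower bound I would verify $\gamma_2(G-v)=\gamma_2(G)$ for every single vertex $v$, splitting into cases by the role of $v$ in the chain. In each case I would produce a $2$-dominating set of $G-v$ of size $\gamma_2(G)$ by restricting or slightly modifying an optimal $2$-dominating set of $G$ (establishing $\gamma_2(G-v)\le \gamma_2(G)$), and conversely establish the matching lower bound by a block-by-block count in which each surviving triangle or square forces a fixed minimum contribution to any $2$-dominating set of $G-v$.

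The main obstacle is the cut-vertex case in the lower bound: deleting a cut-vertex disconnects $G$ into two sub-cacti, and one must confirm that the sum of their $2$-domination numbers equals $\gamma_2(G)$. Because the formulas for $T_n$ and $Q_n$ involve a ceiling, the combinations of chain-length parities on the two sides produce the most error-prone subcases, and this is where an off-by-one slip is easiest; the $O_n$ case is correspondingly cleaner thanks to its linear formula.
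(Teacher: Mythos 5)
The paper offers no argument for this proposition (it is dismissed as ``easy to obtain''), so there is nothing to compare against; judged on its own terms, your submission is a plan rather than a proof, and the plan leaves the decisive work undone. The overall architecture is the only one available --- exhibit two vertices whose deletion changes $\gamma_2$, and check that no single deletion does --- but the second half, which is where essentially all the content lives, is only described: you list the cases (end non-cut, interior non-cut, cut-vertex), state what would have to be verified in each, and then explicitly flag the cut-vertex case as ``the main obstacle'' and ``where an off-by-one slip is easiest'' without resolving it. Deleting any vertex $v$ produces a graph outside the families $T_n$, $Q_n$, $O_n$ (a disjoint union of two smaller chains, or a chain with a pendant or isolated vertex attached), so the closed formulas of the preceding observation do not apply to $G-v$ and each $\gamma_2(G-v)$ must be computed from scratch; none of these computations appears. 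The same applies to your even-$n$ upper bound: ``the end triangle effectively disappears and the count must change'' is an assertion, not a verification --- removing the end cut-vertex and one of its non-cut neighbours isolates the remaining end vertex, which is then forced into every $2$-dominating set, and whether the total goes up, down, or stays put has to be checked.

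A second, more worrying issue is that your upper-bound analysis is built entirely on the formula $\gamma_2(T_n)=\lceil (n+2)/2\rceil$ from the observation, and that formula does not survive a small-case check: for $n=2$ the graph $T_2$ is the bowtie, whose $2$-domination number is $3$ (every $2$-element set leaves some degree-$2$ end vertex with fewer than two neighbours in the set), not $\lceil 4/2\rceil=2$; similarly $\gamma_2(T_3)=4$. The natural candidate is $\gamma_2(T_n)=n+1$ (all $n-1$ cut-vertices plus one non-cut vertex per end triangle), in which case your parity split between odd and even $n$ dissolves and the single construction ``delete the two non-cut vertices of an end triangle'' works uniformly. Before the proof can be completed you must either verify or correct the value of $\gamma_2$ for each family, since both directions of the stability argument hinge on it.
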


We close this section by the following theorem: 

\begin{theorem}
	\begin{enumerate}
		\item[(i)]
		There exist graphs $G$ and $H$ with the same $2$-domination stability such that $|\gamma_2(G)-\gamma_2(H)|$ is arbitrarily large.
		\item[(ii)]
		There exist graphs $G$ and $H$ with the same $2$-domination number such that $|st_{\gamma_2}(G)-st_{\gamma_2}(H)|$ is arbitrarily large.
			\end{enumerate} 		
\end{theorem}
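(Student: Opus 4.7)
The plan is to avoid constructing anything new and instead reuse the families whose $2$-domination number and $2$-domination stability have already been tabulated earlier in this section. Each half of the statement follows from putting two of those tabulated values side by side.

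For part (i), I would use the friendship graphs. By the earlier observation and proposition we have $st_{\gamma_2}(F_n)=1$ for every $n\geq 2$, whereas $\gamma_2(F_n)=n+1$ grows without bound. Setting $G=F_n$ and $H=F_m$ for any $n,m\geq 2$ I then note that $st_{\gamma_2}(G)=st_{\gamma_2}(H)=1$, while $|\gamma_2(G)-\gamma_2(H)|=|n-m|$, which can be made as large as desired by choosing $n$ far from $m$. The book graphs $B_n$ would serve equally well, since $st_{\gamma_2}(B_n)=1$ and $\gamma_2(B_n)=n+1$.

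For part (ii), I would use the complete graphs. The recorded values give $\gamma_2(K_n)=2$ for every $n\geq 2$, whereas $st_{\gamma_2}(K_n)=n-1$ increases linearly with $n$. Choosing $G=K_n$ and $H=K_m$ with $n,m\geq 2$ yields $\gamma_2(G)=\gamma_2(H)=2$ and $|st_{\gamma_2}(G)-st_{\gamma_2}(H)|=|n-m|$, which can be driven arbitrarily high.

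I do not expect a genuine obstacle here, since both parts are one-line consequences of the tables compiled above. The only care required is to cite the correct earlier observation/proposition for each of the four equalities ($\gamma_2$ and $st_{\gamma_2}$ of $F_n$, and $\gamma_2$ and $st_{\gamma_2}$ of $K_n$) and to note the ranges on $n$ and $m$ under which they hold.
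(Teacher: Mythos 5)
Your proposal is correct; both parts do follow immediately from the values already recorded in Section~2. The route differs from the paper's in the choice of witnesses, and the difference is worth noting. For part (i) the paper does not reuse its tables: it introduces two new families (the helm graph obtained from $W_n$ by attaching a pendant to each rim vertex, with $\gamma_2=\lfloor n/2\rfloor+1$ and stability $1$, and the wheel with each rim edge subdivided, with $\gamma_2=n$ and stability $1$) and verifies their stabilities inside the proof, then compares the two at the same parameter $n$ so that the gap is about $n/2$. Your choice of $G=F_n$, $H=F_m$ achieves the same conclusion with no new constructions and no new computations, since $st_{\gamma_2}(F_n)=1$ and $\gamma_2(F_n)=n+1$ are already established; this is cleaner and avoids the paper's reliance on unproved formulas for the helm and subdivided wheel. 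For part (ii) the paper takes $G=P_4$ and $H=K_n$ (so $\gamma_2=2$ for both, stabilities $3$ and $n-1$); your pair $K_n$, $K_m$ is the same idea with an even more uniform family. The only point of care, which you already flag, is to respect the ranges $n,m\ge 2$ under which the cited equalities hold.
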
 
\begin{proof}
	\begin{enumerate}
		\item[(i)] 
		Suppose that $G$ is the helm graph which is obtained from a wheel graph of order $n$ (i.e., $W_n=K_1\vee C_{n-1}$) by appending a single
		pendant edge to each vertex of cycle graph. We know that for $n\geq 3$, 
	$\gamma_2(G) = \lfloor \frac{n}{2} \rfloor + 1$ (see \cite{M1}).  If we remove the central vertex from $G$, say $v$, then 
	\[
	\gamma_2(G-\{v\})= n \neq \lfloor \frac{n}{2} \rfloor + 1,
	\]
	and so $st_{\gamma_2}(G)=1$. 
	
	Now consider the graph $H$ which is obtained from a wheel graph of order $n$ (i.e., $W_n=K_1\vee C_{n-1}$) by inserting a new vertex between any pair of adjacent vertices on the cycle $C_{n-1}$. It is easy to see that 
	for $n\geq 3$, $\gamma_2(H)=n$ (see \cite{M1}). By removing one vertex from $C_{n-1}$, say $v$, we have 
\[
\gamma_2(G-\{v\})= n -1\neq n,
\]
and so $st_{\gamma_2}(H)=1$. So we have the result.  
\item[(ii)] 
Consider the graph $G=P_4$ and $H=K_n$. We have $\gamma_2(P_4)=\gamma_2(K_n)=2$. On the other hand, $st_{\gamma_2}(P_n)=3$ and $st_{\gamma_2}(K_n)=n-1$ and so
we have the result.\qed
	
\end{enumerate}
\end{proof}

\section{Bounds on $st_{\gamma_2}(G)$}
In this section, we derive some bounds related to the stability of $2$-domination number in graphs.
First, we study the relationship between the stabilities of $2$-domination number of graph $G$ and $G-v$, where $v\in  V (G)$. Also we obtain upper bounds for $st_{\gamma_2}$(G).
\begin{proposition}
	Let $G$ be a graph and $v$ be a vertex of $G$. Then
	\[
	st_{\gamma_2}(G) \leq st_{\gamma_2}(G - v) + 1.
	\]
\end{proposition}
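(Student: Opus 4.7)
The plan is to let $k = st_{\gamma_2}(G-v)$ and fix a vertex set $S \subseteq V(G-v)$ with $|S| = k$ such that $\gamma_2((G-v) - S) \neq \gamma_2(G-v)$; such an $S$ exists by definition. The goal is to produce, within $G$, a set of size at most $k+1$ whose removal changes $\gamma_2(G)$. The natural candidate is $S' = S \cup \{v\}$, and the subtlety is that $\gamma_2((G-v)-S)$ is known to differ from $\gamma_2(G-v)$, while we need it to differ from $\gamma_2(G)$.

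To handle this, I would split into two cases according to whether removing the single vertex $v$ already changes the $2$-domination number. In Case~1, suppose $\gamma_2(G-v) \neq \gamma_2(G)$. Then $\{v\}$ itself is a vertex set of size $1$ whose removal changes $\gamma_2$, so $st_{\gamma_2}(G) \leq 1 \leq st_{\gamma_2}(G-v)+1$, where the last inequality uses $st_{\gamma_2}(G-v) \geq 1$ (one must remove at least one vertex to change any parameter).

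In Case~2, suppose $\gamma_2(G-v) = \gamma_2(G)$. Set $S' = S \cup \{v\}$, so $|S'| = k+1$ and, as a graph, $G - S' = (G-v) - S$. Therefore
\[
\gamma_2(G - S') = \gamma_2((G-v) - S) \neq \gamma_2(G-v) = \gamma_2(G),
\]
which shows that removing $S'$ changes the $2$-domination number of $G$, giving $st_{\gamma_2}(G) \leq |S'| = k+1 = st_{\gamma_2}(G-v)+1$.

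I do not expect a genuine obstacle here; the only mild care needed is the case split, since a priori $S$ is designed to change $\gamma_2(G-v)$, not $\gamma_2(G)$, and Case~1 is exactly the scenario in which this distinction matters.
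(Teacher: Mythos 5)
Your proof is correct and follows essentially the same route as the paper: the same case split on whether $\gamma_2(G-v)=\gamma_2(G)$, with the set $S\cup\{v\}$ witnessing the bound in the nontrivial case. You simply spell out the details that the paper leaves implicit.
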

\begin{proof}
	If $\gamma_2(G) = \gamma_2(G - v)$, then we have
	$st_{\gamma_2}(G) \leq st_{\gamma_2}(G - v) + 1.$
	If $\gamma_2(G) \neq \gamma_2(G - v)$, then removing $v$ change the 2-domination number, which implies
	$st_{\gamma_2}(G) = 1,$
	and so the inequality holds trivially. \qed
	\end{proof} 
	
	By applying this proposition iteratively, for vertices $v_1, v_2, \ldots, v_s$ with $1 \leq s \leq n-2$ and $n = |V(G)|$, we get
	\[
	st_{\gamma_2}(G) \leq st_{\gamma_2}(G - v_1 - \cdots - v_s) + s.
	\]
	Using this formula, various upper bounds for $st_{\gamma_2}(G)$ can be obtained.
	In the next theorem, we state some of these upper bounds. The proof for each case involves removing vertices from $G$ until the induced subgraph satisfying the hypothesis appears. Then, applying the above inequality and the known value of $st_{\gamma_2}(G - v_1 - \cdots - v_s)$ yields the result.

	\begin{theorem}
		Let $G$ be a simple graph of order $n \geq 2$. Then:
		\begin{enumerate}
			\item[(i)] $st_{\gamma_2}(G) \leq n - 1$.
			\item[(ii)] If $G$ has the star graph $K_{1,t}$ as the induced subgraph with $t \geq 3$, then $st_{\gamma_2}(G) \leq n - t $.
		\end{enumerate}
	\end{theorem}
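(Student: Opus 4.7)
The plan is to deploy the iterated proposition $st_{\gamma_2}(G) \leq st_{\gamma_2}(G - v_1 - \cdots - v_s) + s$ in the exact manner telegraphed by the paragraph just before the theorem: remove vertices from $G$ until an induced subgraph with a known small stability appears, and then cash in the bound.

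For part (i), I would remove any $n-2$ vertices, leaving an induced subgraph $H$ on exactly two vertices. Either $H = K_2$ or $H = \overline{K_2}$, and in both cases every vertex has degree at most $1$, so it cannot be $2$-dominated from outside; hence $V(H)$ is the unique $2$-dominating set and $\gamma_2(H)=2$. Deleting one further vertex reduces $H$ to $K_1$ with $\gamma_2(K_1)=1$, which shows $st_{\gamma_2}(H)=1$. Applying the iterated proposition with $s=n-2$ yields
\[
st_{\gamma_2}(G) \leq st_{\gamma_2}(H) + (n-2) = 1 + (n-2) = n-1.
\]

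For part (ii), I would let $S \subseteq V(G)$ be the vertex set of the induced $K_{1,t}$ and remove the $n-(t+1)$ vertices of $V(G)\setminus S$. Because the copy of $K_{1,t}$ is induced, the remaining graph is exactly $K_{1,t}$. By the earlier proposition we already know $st_{\gamma_2}(K_{1,t})=1$ for $t\geq 2$, so another application of the iterated proposition gives
\[
st_{\gamma_2}(G) \leq st_{\gamma_2}(K_{1,t}) + (n-t-1) = 1 + (n-t-1) = n-t.
\]

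There is no genuine obstacle here; the only point that needs a line of justification is the claim $\gamma_2(H)=2$ for any two-vertex graph $H$, which I would handle with the one-sentence degree argument above. The rest is bookkeeping on $s$ and a direct substitution. If anything, the trickiest minor detail is making sure the induced (rather than merely spanning) hypothesis in (ii) is used, since it guarantees that after deleting $V(G)\setminus S$ the residual graph is literally $K_{1,t}$ rather than a supergraph of it, so the recorded value $st_{\gamma_2}(K_{1,t})=1$ can be plugged in without modification.
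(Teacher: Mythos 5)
Your proposal is correct and follows exactly the route the paper sketches in the paragraph preceding the theorem: delete vertices down to a residual induced subgraph of known stability (a two-vertex graph for (i), the induced $K_{1,t}$ for (ii)) and apply the iterated inequality $st_{\gamma_2}(G) \leq st_{\gamma_2}(G - v_1 - \cdots - v_s) + s$. Your added justification that $\gamma_2(H)=2$ and $st_{\gamma_2}(H)=1$ for any two-vertex graph $H$ is a detail the paper leaves implicit, and it is right.
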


	We need the following theorem: 
	
	\begin{theorem}{\rm\cite{DAM}}\label{upper}
		\begin{enumerate}
			\item[(i)]	
	If the minimum degree $\delta(G)$ is $0$ or $1$, then $\gamma_2(G)$ can be equal to $n$.
	\item[(ii)] 
	 If $\delta(G)=2$, then $\gamma_2(G)\leq \frac{2}{3}n$. 
	 \item[(iii)] 
	  If $\delta(G)\geq 3$, then $\gamma_2(G)\leq \frac{1}{2}n$. 
	 	\end{enumerate}
	 \end{theorem}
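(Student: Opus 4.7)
I would handle the three assertions separately. For (i), the plan is simply to exhibit explicit realizers: if $v$ is isolated, then $v$ must lie in every $2$-dominating set (no neighbour to cover it), and if $v$ is a pendant vertex, then $v$ has only one neighbour and so can never accumulate two neighbours inside a candidate $D$, again forcing $v \in D$. Thus for $G = tK_2$ (which has $\delta = 1$) or $G = \overline{K_n}$ (which has $\delta = 0$), every vertex lies in every $2$-dominating set, giving $\gamma_2(G) = n$. This shows the bound $n$ is attained under the hypotheses of (i).

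For (ii) and (iii), the first natural attempt is random selection with alteration: pick each vertex independently with probability $p$ to form $S$, and set $D = S \cup \{v \notin S : |N(v)\cap S| \le 1\}$. This automatically produces a $2$-dominating set, satisfying
\[
E[|D|] \;\le\; n\Bigl(p + (1-p)\bigl[(1-p)^{\delta} + \delta\,p\,(1-p)^{\delta-1}\bigr]\Bigr),
\]
since the vertex-wise inclusion probability is monotone decreasing in $d(v)$. However, even after optimising $p$, this template yields only $23n/27$ for $\delta = 2$ and roughly $0.74\,n$ for $\delta = 3$, which fall short of the target constants $2/3$ and $1/2$; so pure randomisation will not suffice.

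I would therefore close the gap with a structural, minimality-based argument. Take $D$ to be a minimum $2$-dominating set. By minimality each $v \in D$ is of one of two types: either $v$ has at most one neighbour in $D$ (so $v$ itself would fail $2$-domination upon deletion), or $v$ has at least two neighbours in $D$ but carries a private witness $u \notin D$ with $N(u) \cap D = \{v, w\}$. Partitioning $D$ accordingly into $A$ (first type) and $B$ (second type), the witness pigeonhole gives $|B| \le 2|V \setminus D|$, while each $v \in A$ throws at least $\delta - 1$ edges into $V \setminus D$. Double-counting the $D$--$(V \setminus D)$ edges against the $2$-domination lower bound $|N(u)\cap D| \ge 2$ for $u \in V \setminus D$, and using $\delta \ge 2$ or $\delta \ge 3$, should yield $|D| \le 2|V\setminus D|$ and $|D| \le |V\setminus D|$ respectively, which are exactly $\gamma_2(G) \le \tfrac{2}{3}n$ and $\gamma_2(G) \le \tfrac{1}{2}n$.

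The main obstacle I anticipate is combining the $|B|$-bound with the $A$-edge count without slack: the witnesses of type-$B$ vertices and the external neighbours of type-$A$ vertices can overlap inside $V \setminus D$, so naive summation loses a factor. The cleanest bookkeeping likely goes through a discharging step in which each $u \in V \setminus D$ distributes a unit charge among its $D$-neighbours; the $\delta \ge 3$ hypothesis should then force enough reserve charge on the $A$-side to offset double-counting with $B$, producing the desired ratio in case (iii), while a weaker version of the same argument delivers case (ii).
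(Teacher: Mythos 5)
First, a point of reference: the paper itself offers no proof of this theorem --- it is quoted wholesale from \cite{DAM} --- so your argument has to stand entirely on its own. Part (i) does: isolated and pendant vertices lie in every $2$-dominating set, and $\overline{K_n}$ and $tK_2$ realize $\gamma_2=n$, which is all that ``can be equal to $n$'' requires. Your randomized calculation is also correct (the optimum of $p+(1-p)^3+2p(1-p)^2$ is $23/27$ at $p=2/3$), and you rightly discard that route. The genuine gap is in the structural argument you fall back on for (ii) and (iii): as set up, it cannot close, and not merely because of overlapping witnesses. For (iii) the target is $|D|\le|V\setminus D|$, but your only control on the class $B$ is $|B|\le 2|V\setminus D|$, which already exceeds the entire target even in the best case $A=\emptyset$; no discharging on the $A$-side can repair a $B$-bound that by itself only reaches $2n/3$. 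For (ii) the target is $|D|\le 2|V\setminus D|$, and your ledger gives $|D|\le|A|+2|V\setminus D|$ with no bound on $|A|$ in terms of $|V\setminus D|$: each $v\in A$ does send at least $\delta-1$ edges into $V\setminus D$, but a single vertex of $V\setminus D$ may receive arbitrarily many such edges, so these edges cannot be charged boundedly to $V\setminus D$. Minimality of $D$ plus private witnesses simply does not supply the inequalities you need.

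What the statement actually rests on is quite different. Part (ii) is the classical Cockayne--Gamble--Shepherd bound $\gamma_k(G)\le\tfrac{k}{k+1}n$ for $\delta(G)\ge k$ (with $k=2$), whose proof constructs a large set $S$ each of whose vertices keeps $k$ neighbours outside $S$ and takes $D=V\setminus S$; it is a maximality/exchange argument on the complement, not a minimality argument on $D$. Part (iii) is the hard content of the cited reference \cite{DAM}, where the bound for $\delta\ge 3$ is extracted from a multi-phase greedy selection analyzed by an elaborate weighting (discharging) scheme occupying most of that paper; the constant $1/2$ is far beyond what a private-neighbour count on a minimal $2$-dominating set yields. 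So your proposal should be credited for (i) and for correctly diagnosing the failure of randomization, but (ii) and (iii) remain unproved: the ``discharging step'' you defer to is precisely where all of the work lies, and the bookkeeping you propose to feed into it is already arithmetically inconsistent with the targets.
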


Now we state and prove the following corollary.

	\begin{corollary}\label{T33}
		\begin{enumerate}
			\item[(i)]	
			If $G$ is a graph of order $n$ with $\delta(G)=2$, then 
		\[
		st_{\gamma_2}(G) \leq n + 1 -\frac{3\gamma_2(G)}{2}.
		\]		
		\item[(ii)]
		If $G$ is a graph of order $n$ with $\delta(G)\geq 3$, then 
		\[
		st_{\gamma_2}(G) \leq n + 1 -2\gamma_2(G).
		\]
		\end{enumerate} 
	\end{corollary}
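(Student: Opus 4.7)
The plan is to combine Theorem~\ref{upper} with the iterated inequality $st_{\gamma_2}(G)\le st_{\gamma_2}(G-v_1-\cdots-v_s)+s$ from the preceding proposition. The strategy for both parts is to delete precisely enough vertices so that the induced subgraph $H=G-\{v_1,\dots,v_s\}$ has order small enough that $\gamma_2(H)$ is forced, by Theorem~\ref{upper}, to be strictly smaller than $\gamma_2(G)$; once that happens, $H$ witnesses a drop in the $2$-domination number and the definition of $st_{\gamma_2}$ gives the desired upper bound.

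For (i), set $s:=n+1-\tfrac{3\gamma_2(G)}{2}$. Theorem~\ref{upper}(ii) applied to $G$ yields $n\ge \tfrac{3\gamma_2(G)}{2}$, so $s\ge 1$, and we can remove $s$ vertices to obtain an induced subgraph $H$ on $n-s=\tfrac{3\gamma_2(G)}{2}-1$ vertices. I would choose the removed vertices greedily so that $\delta(H)\ge 2$ is retained; applying Theorem~\ref{upper}(ii) to $H$ then gives
\[
\gamma_2(H)\;\le\;\tfrac{2}{3}\,|V(H)|\;=\;\gamma_2(G)-\tfrac{2}{3}\;<\;\gamma_2(G),
\]
so $\gamma_2(H)\ne\gamma_2(G)$ and $st_{\gamma_2}(G)\le s$. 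Part (ii) runs in parallel: take $s:=n+1-2\gamma_2(G)$, so $|V(H)|=2\gamma_2(G)-1$, remove the $s$ vertices while preserving $\delta(H)\ge 3$, and apply Theorem~\ref{upper}(iii) to obtain $\gamma_2(H)\le \tfrac{|V(H)|}{2}=\gamma_2(G)-\tfrac{1}{2}<\gamma_2(G)$.

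The step I expect to be most delicate is choosing the deleted vertices so that the minimum-degree hypothesis needed by Theorem~\ref{upper} is still satisfied by $H$; removing arbitrary vertices can collapse the minimum degree below the required threshold, in which case the key bound on $\gamma_2(H)$ no longer applies. In generic configurations one can remove vertices one at a time, avoiding those whose deletion would drop a neighbor below degree $2$ (respectively $3$), but in near-regular extremal configurations a more careful local rearrangement or a short case analysis may be required; minor integrality issues when $\gamma_2(G)$ is odd are absorbed by taking an appropriate floor. Once this technicality is handled, the two arithmetic computations above close out both parts.
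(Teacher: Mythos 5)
Your approach is, up to taking the contrapositive, the same as the paper's: both arguments delete enough vertices that Theorem~\ref{upper} forces the $2$-domination number of the remaining graph below $\gamma_2(G)$, and both therefore need the vertex-deleted subgraph $H=G-S$ to satisfy the minimum-degree hypothesis of that theorem. The step you yourself flag as delicate --- choosing $S$ so that $\delta(H)\geq 2$ (resp.\ $\geq 3$) survives --- is not a technicality that a greedy choice or a ``short case analysis'' can absorb: it is a genuine gap, and it cannot be closed. If $G$ is $2$-regular, deleting \emph{any} vertex already drops the minimum degree to $1$, so no admissible nonempty $S$ exists at all; and indeed the statement itself fails on such graphs. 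For $C_4$ one has $\gamma_2(C_4)=2$ and, by the paper's own proposition on cycles, $st_{\gamma_2}(C_4)=3$, while part (i) would give $st_{\gamma_2}(C_4)\leq 4+1-3=2$. Similarly $C_6$ violates (i), and every complete graph $K_n$ with $n\geq 4$ violates (ii), since $st_{\gamma_2}(K_n)=n-1$ while the claimed bound is $n+1-2\cdot 2=n-3$.

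For what it is worth, the paper's own proof commits exactly the same error, only silently: it sets $k=st_{\gamma_2}(G)$, removes an arbitrary set $S$ of $k-1$ vertices (which by definition of stability preserves $\gamma_2$), and then applies the bound $\gamma_2(G-S)\leq \tfrac{2}{3}\,|V(G-S)|$ to $G-S$ even though nothing guarantees $\delta(G-S)\geq 2$. So you have reproduced the intended argument and correctly located its weak point; the honest conclusion is that the corollary needs either an additional hypothesis guaranteeing that the minimum-degree condition persists under the relevant deletions, or a different proof altogether --- not that the obstruction can be waved away in ``generic configurations.''
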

	\begin{proof}
			\begin{enumerate}
				\item[(i)]
		Let $st_{\gamma_2}(G) = k$. By definition of stability, removal of any set $S = \{v_1, \ldots, v_{k-1}\}$ with $k-1$ vertices preserves the $2$-domination number, i.e.,  
		$\gamma_2(G) = \gamma_2(G - v_1) = \cdots = \gamma_2(G - v_1 - \cdots - v_{k-1}).$
		For the remaining graph $G - S$ of order $n - (k-1) = n - k + 1$, we use the known upper bound for $2$-domination in Part (i) of Theorem \ref{upper},
		$\gamma_2(G - S) \leq \left\lceil \frac{2(n - k + 1)}{3} \right\rceil.$
		Since $\gamma_2(G - S) = \gamma_2(G)$, we have
		$\gamma_2(G) \leq \frac{2(n - k + 1)}{3}.$
		Solving for $k$ yields:
		$3\gamma_2(G) \leq 2n - 2k + 2 $ which implies $k \leq n + 1 - \frac{3}{2}\gamma_2(G).$
		Therefore, 
		$	st_{\gamma_2}(G) \leq n + 1 -\frac{3}{2}\gamma_2(G).$
		
		\item[(ii)] With similar argument in the proof of Part (i) and 
		the upper bound for $2$-domination in Part (ii) of Theorem \ref{upper}, we have the result. \qed
		\end{enumerate} 
	\end{proof}
\begin{corollary}
	Let $G$ be a graph of order $n \geq 2$. If $st_{\gamma_2}(G) = n - 1$, then $\gamma_2(G) = 1$.
\end{corollary}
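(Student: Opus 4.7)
The plan is to invoke Corollary \ref{T33} as a black box and extract $\gamma_2(G)$ from the resulting tight inequality. Substituting the hypothesis $st_{\gamma_2}(G) = n-1$ into part (ii) of that corollary (valid when $\delta(G) \geq 3$) gives
\[
n-1 \;\leq\; n+1-2\gamma_2(G),
\]
which simplifies to $\gamma_2(G) \leq 1$. Substituting the same hypothesis into part (i) (valid when $\delta(G) = 2$) gives $n-1 \leq n+1-\tfrac{3}{2}\gamma_2(G)$, so $\gamma_2(G) \leq 4/3$, and integrality of $\gamma_2$ again forces $\gamma_2(G) \leq 1$. Combined with the trivial lower bound $\gamma_2(G) \geq 1$ (any $2$-dominating set of a nonempty graph is nonempty), either case produces $\gamma_2(G) = 1$.

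The regime that Corollary \ref{T33} does not cover is $\delta(G) \leq 1$, and I would dispatch it with a direct structural observation: any vertex $v$ with $\deg(v) \in \{0,1\}$ must lie in every $2$-dominating set of $G$, because it does not have the two neighbors required to be dominated from outside. Consequently $\gamma_2(G-v) \leq \gamma_2(G)-1$, which immediately gives $st_{\gamma_2}(G) = 1$. Combining this with the hypothesis $st_{\gamma_2}(G) = n-1$ reduces the discussion to $n = 2$, a base case small enough to verify by inspection.

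The main obstacle I anticipate is precisely the low-minimum-degree subcase, since Corollary \ref{T33} is silent whenever $\delta(G) \leq 1$; the forced-membership observation above is what closes that gap in essentially one line. Once the two regimes $\delta(G) \geq 2$ and $\delta(G) \leq 1$ are both handled, the equality $\gamma_2(G) = 1$ follows in full generality, which is exactly the content of the corollary.
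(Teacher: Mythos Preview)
Your treatment of the cases $\delta(G)\ge 2$ via Corollary~\ref{T33} mirrors the paper exactly: the paper's entire proof is the single sentence ``It is a direct consequence of Theorem~\ref{T33}.'' You go further by correctly noticing that Corollary~\ref{T33} is silent when $\delta(G)\le 1$, and you attempt to patch that regime separately.

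The patch, however, does not go through. The implication ``$v$ lies in every $2$-dominating set, hence $\gamma_2(G-v)\le\gamma_2(G)-1$'' is valid for an isolated vertex $v$ but not for a leaf: if $D$ is a minimum $2$-dominating set and $w$ is the unique neighbour of the leaf $v$, then $D\setminus\{v\}$ need not $2$-dominate $w$ in $G-v$. The path $P_3=abc$ already witnesses this, since $\gamma_2(P_3)=2$ while $\gamma_2(P_3-a)=\gamma_2(P_2)=2$ as well, so deleting the leaf does not change $\gamma_2$ and your conclusion $st_{\gamma_2}(G)=1$ does not follow. More seriously, the ``verify by inspection'' base case $n=2$ actually refutes the statement rather than confirming it: for $K_2$ one has $\gamma_2(K_2)=2$ and $st_{\gamma_2}(K_2)=1=n-1$, yet $\gamma_2(K_2)\ne 1$. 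The same phenomenon persists for every $K_n$ with $n\ge 2$, where the paper itself records $\gamma_2(K_n)=2$ and $st_{\gamma_2}(K_n)=n-1$. So the gap you flagged in the $\delta(G)\le 1$ range is not a technicality to be dispatched in one line; it is precisely where counterexamples to the corollary live, and no argument can close it.
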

\begin{proof}
	It is a direct consequence of Theorem \ref{T33}.
\end{proof}
	
	\begin{theorem}\label{up}
		Let $G$ be a graph of order $n$ such that $\gamma_2(G) \geq 2$. Then
		$st_{\gamma_2}(G) \leq n - \gamma_2(G)+1.$
	\end{theorem}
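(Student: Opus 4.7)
The plan is to establish the bound by a pure cardinality argument: remove so many vertices that the surviving graph is simply too small to have $2$-domination number $\gamma_2(G)$. Write $k := \gamma_2(G)$ and let $S \subseteq V(G)$ be any subset of size $n - k + 1$, so that $G - S$ has exactly $k - 1$ vertices.

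The key observation is that for \emph{any} graph $H$, the full vertex set $V(H)$ is trivially a $2$-dominating set of $H$, because the defining condition ``$|N(v) \cap D| \geq 2$ for every $v \in V(H) \setminus D$'' is vacuously satisfied when $D = V(H)$. Applying this to $H = G - S$ gives $\gamma_2(G - S) \leq k - 1$. Combined with the hypothesis $k \geq 2$, this yields
\[
\gamma_2(G - S) \leq k - 1 < k = \gamma_2(G),
\]
so removing $S$ genuinely changes the $2$-domination number. By the definition of $st_{\gamma_2}$ as the minimum size of a vertex set whose removal alters $\gamma_2$, we conclude $st_{\gamma_2}(G) \leq |S| = n - \gamma_2(G) + 1$, as desired.

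There is no real obstacle to overcome here: the entire argument rests on the single observation $\gamma_2(H) \leq |V(H)|$, applied after pruning $G$ down to $k - 1$ vertices. The hypothesis $\gamma_2(G) \geq 2$ is precisely what is needed to guarantee $k - 1 \geq 1$ and hence to ensure that the drop from $k$ to $k - 1$ represents an actual change in the parameter rather than a degenerate collapse. No choice of extremal set $S$ or structural analysis of $G$ is required; any $(n-k+1)$-element vertex subset works.
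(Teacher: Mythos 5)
Your proof is correct, and it takes a genuinely more direct route than the paper's, although both arguments ultimately rest on the same trivial fact $\gamma_2(H)\le |V(H)|$. The paper works through a minimum $2$-dominating set $D$: it removes the $n-\gamma_2(G)$ vertices of $V(G)\setminus D$, asserts that this leaves the $2$-domination number unchanged, and then (implicitly) uses the fact that deleting one further vertex from the surviving $\gamma_2(G)$-vertex graph must push the parameter below $\gamma_2(G)$. You bypass the intermediate step entirely: any set $S$ of $n-\gamma_2(G)+1$ vertices leaves only $\gamma_2(G)-1$ vertices, whose full vertex set is vacuously a $2$-dominating set, so $\gamma_2(G-S)\le \gamma_2(G)-1<\gamma_2(G)$, and the bound follows. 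This buys two things. First, it is shorter and makes no structural claim about $G$; in particular you show that \emph{every} $(n-\gamma_2(G)+1)$-subset witnesses the bound, not just a carefully chosen one. Second, it sidesteps the paper's unproved assertion that removing $V(G)\setminus D$ preserves $\gamma_2$ — that claim is not obviously true (a proper subset of $D$ could $2$-dominate the induced subgraph $G[D]$ without $2$-dominating $G$), although the theorem survives either way since an earlier change in the parameter would only improve the bound. The hypothesis $\gamma_2(G)\ge 2$ plays the same role in both proofs, namely guaranteeing that at least one vertex survives the deletion so that the comparison of $2$-domination numbers is meaningful.
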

	
	\begin{proof}
		Let $D$ be a minimum $2$-dominating set of $G$, with $|D| = \gamma_2(G)$. By the definition of $2$-dominating, every vertex in $V(G) \setminus D$ has at least two neighbors in $D$.
				By removing all vertices in $V(G)\setminus D$, the $2$-domination number does not change and so $st_{\gamma_2}(G) \leq n - \gamma_2(G)+1.$\qed
	\end{proof}

	\begin{theorem}
		Let $G$ be a graph of order $n \geq 2$ with $\gamma_2(G) \geq 2$ and $\gamma_2(\overline{G}) \geq 2$. Then
		\[
		st_{\gamma_2}(G) + st_{\gamma_2}(\overline{G}) \leq 2n-2.
		\]
		\end{theorem}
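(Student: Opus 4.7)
The plan is to apply Theorem \ref{up} to both $G$ and its complement $\overline{G}$ separately and then sum the two inequalities, using the hypotheses $\gamma_2(G)\ge 2$ and $\gamma_2(\overline{G})\ge 2$ to absorb the loss.

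First I would note that $\overline{G}$ is a graph on the same vertex set, of the same order $n$, and satisfies the hypothesis $\gamma_2(\overline{G})\ge 2$ required by Theorem \ref{up}. Therefore that theorem applies to $\overline{G}$ just as it does to $G$, yielding
\[
st_{\gamma_2}(G)\le n-\gamma_2(G)+1\qquad\text{and}\qquad st_{\gamma_2}(\overline{G})\le n-\gamma_2(\overline{G})+1.
\]
Adding these two bounds gives
\[
st_{\gamma_2}(G)+st_{\gamma_2}(\overline{G})\le 2n+2-\bigl(\gamma_2(G)+\gamma_2(\overline{G})\bigr).
\]

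Next I would invoke the hypotheses $\gamma_2(G)\ge 2$ and $\gamma_2(\overline{G})\ge 2$ to conclude that $\gamma_2(G)+\gamma_2(\overline{G})\ge 4$, and substitute this into the previous inequality to obtain
\[
st_{\gamma_2}(G)+st_{\gamma_2}(\overline{G})\le 2n+2-4=2n-2,
\]
which is exactly the desired bound.

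There is essentially no serious obstacle here; the only point worth double-checking is that Theorem \ref{up} is indeed being used legitimately on the complement, which it is because $\overline{G}$ is a simple graph of order $n$ with $\gamma_2(\overline{G})\ge 2$ by hypothesis. Consequently the proof reduces to a one-line application of the preceding theorem to each of $G$ and $\overline{G}$, followed by the trivial arithmetic observation above.
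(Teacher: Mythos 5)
Your proof is correct and follows exactly the same route as the paper: apply Theorem \ref{up} to both $G$ and $\overline{G}$, add the two bounds, and use $\gamma_2(G)+\gamma_2(\overline{G})\geq 4$ to obtain $2n-2$. No differences worth noting.
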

	
	\begin{proof}
		We have 	$\gamma_2(G) + \gamma_2(\overline{G}) \geq 4.$
		Using the stability bound in Theorem \ref{up}
		$st_{\gamma_2}(G) \leq n - \gamma_2(G)+1.$ It follows that:
		\begin{align*}
		st_{\gamma_2}(G) + st_{\gamma_2}(\overline{G}) &\leq (n - \gamma_2(G)+1) + (n - \gamma_2(\overline{G})+1) \\
		&= 2n+2- (\gamma_2(G) + \gamma_2(\overline{G})) \\
		&\leq 2n-2.
		\end{align*}
		\qed
	\end{proof}

\section{Results for some operations of two graphs}
In this section, we study the stability of 2-domination number of some operations of two graphs. First we consider the join of two graphs. 
The join $ G\vee H$ of two graphs $G$ and $H$ with disjoint vertex sets $V(G)$  and edge sets $E(G)$ is the graph union $G\cup H$ together with all the edges joining $V(G)$. 

\begin{theorem}\label{join2dom}
	If $G$ and $H$ are nonempty graphs, then
	\[
	\gamma_2(G \vee H) = \min\{\gamma_2(G), \gamma_2(H)\}.
	\]
\end{theorem}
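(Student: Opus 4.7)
The plan is to prove the equality as two separate inequalities, exploiting the fact that in $G\vee H$ every vertex of $V(G)$ is adjacent to every vertex of $V(H)$.

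For the upper bound, without loss of generality I assume $\gamma_2(G)\le\gamma_2(H)$ and let $D^*$ be a minimum $2$-dominating set of $G$. Every vertex of $V(G)\setminus D^*$ already has at least two neighbors in $D^*$ inside $G$, and this property is inherited in $G\vee H$. Every vertex $v\in V(H)$ is joined to every vertex of $D^*\subseteq V(G)$ and therefore has $|D^*|\ge 2$ neighbors in $D^*$, provided $\gamma_2(G)\ge 2$ (which is automatic once $|V(G)|\ge 2$; the trivial case $|V(G)|=1$ is checked directly). Hence $D^*$ is a $2$-dominating set of $G\vee H$, giving $\gamma_2(G\vee H)\le\gamma_2(G)=\min\{\gamma_2(G),\gamma_2(H)\}$.

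For the lower bound, I would take a minimum $2$-dominating set $D$ of $G\vee H$ and split it as $D_G=D\cap V(G)$ and $D_H=D\cap V(H)$. If one of these pieces is empty, say $D_H=\emptyset$, then for each $v\in V(G)\setminus D_G$ the condition $|N_{G\vee H}(v)\cap D|\ge 2$ reduces to $|N_G(v)\cap D_G|\ge 2$, so $D_G=D$ is a $2$-dominating set of $G$ and $|D|\ge\gamma_2(G)\ge\min\{\gamma_2(G),\gamma_2(H)\}$. The case $D_G=\emptyset$ is symmetric and gives $|D|\ge\gamma_2(H)$.

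The main obstacle is the mixed case where $D_G$ and $D_H$ are both nonempty. In this situation every vertex of $V(G)\setminus D_G$ already collects $|D_H|$ neighbors in $D$ just from the join edges, so if $|D_H|\ge 2$ the internal structure of $G$ imposes no further constraint, and symmetrically on the $H$-side; this means small mixed sets can 2-dominate $G\vee H$ without being $2$-dominating in either factor. The delicate step, and where I expect the bulk of the work to lie, is to show that any such mixed $D$ can be replaced by a $2$-dominating set of size at most $|D|$ contained entirely in $V(G)$ or entirely in $V(H)$, thereby reducing the mixed situation to one of the two settled cases. Carrying this reduction out carefully, using the minimality of $D$ and the nonemptiness hypothesis on $G$ and $H$, is the crux of the argument.
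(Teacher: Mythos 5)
Your upper-bound half is correct: assuming $\gamma_2(G)\le\gamma_2(H)$ and $|V(G)|\ge 2$, a minimum $2$-dominating set $D^*$ of $G$ stays $2$-dominating in $G\vee H$, since every vertex of $H$ is joined to all $|D^*|=\gamma_2(G)\ge 2$ of its members. The trouble is the lower bound. You rightly isolate the mixed case $D_G\neq\emptyset\neq D_H$ as the crux, and you even record the key fact that once $|D_H|\ge 2$ the join edges alone $2$-dominate all of $V(G)\setminus D_G$ --- but you then defer the ``delicate step'' of converting a mixed set into a one-sided set of no larger size. That step does not exist, because the theorem is false. Your own observation shows that \emph{any} set consisting of two vertices of $G$ and two vertices of $H$ is a $2$-dominating set of $G\vee H$, so $\gamma_2(G\vee H)\le 4$ whenever both factors have at least two vertices; the claimed equality therefore fails whenever $\min\{\gamma_2(G),\gamma_2(H)\}>4$. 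Concretely, $\gamma_2(P_{20}\vee P_{20})\le 4$ while $\gamma_2(P_{20})=11$; and the paper's own observation that $\gamma_2(K_{m,n})=4$ for $m,n\ge 2$, applied to $K_{m,n}=\overline{K_m}\vee\overline{K_n}$ with $\gamma_2(\overline{K_m})=m$, says the same thing. The statement one can actually prove along your lines is $\gamma_2(G\vee H)=\min\{\gamma_2(G),\gamma_2(H),4\}$ when both factors have at least two vertices (with the one-vertex cases treated separately).

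For what it is worth, the paper's proof is no more complete than yours: it asserts that $D_1\cup D_2$ is a $2$-dominating set of $G\vee H$ for \emph{every} pair $D_1\subseteq V(G)$, $D_2\subseteq V(H)$ with $|D_1|+|D_2|=i$, which is false for $|D_1|=|D_2|=1$ and, in the cases where it is true ($|D_1|,|D_2|\ge 2$), contradicts the claimed equality rather than supporting it. So the gap you flagged is genuine and unfixable as stated; the honest conclusion of your argument is the corrected formula above, not the theorem, and the corollary bounding $st_{\gamma_2}(G\vee H)$ that the paper derives from this theorem needs to be revisited accordingly.
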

\begin{proof} 
	Suppose that $|V(G)|=n_1$ and $|V(H)|=n_2$. It is clear to achieve that $\gamma_{2}(G\vee H)\geq 2$. Let $1\leq i \leq n_{1}+n_{2}$.
	We can see that for every $D_{1}\subseteq V(G)$ and $D_{2} \subseteq V(H)$ such that $\vert D_{1} \vert=i_{1}$ and $\vert D_{2} \vert=i_{2}$ where $i_{1}+i_{2}=i$, $D_{1} \cup D_{2}$ is a $2$-dominating set of $G\vee H$. Moreover, if $D$ is a $2$-dominating set for $G$ or $H$ of size $i$ then $D$ is the $2$-dominating set for $G \vee H$. Therefore we have the result. \qed
	\end{proof}

By Theorem \ref{join2dom}, we have the following result.

\begin{theorem}
	Let $G$ and $H$ be two nonempty graphs, then
	\[
	st_{\gamma_2}(G \vee H) \leq \min \left\{ st_{\gamma_2}(G), \, st_{\gamma_2}(H) \right\}.
	\]
\end{theorem}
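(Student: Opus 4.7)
The plan is to pull back a vertex-deletion certificate from whichever factor realizes the smaller stability, transport it into the join as a set of removed vertices, and let Theorem~\ref{join2dom} do the computation. Without loss of generality assume $st_{\gamma_2}(G) \le st_{\gamma_2}(H)$ and write $k = st_{\gamma_2}(G)$. By the definition of $st_{\gamma_2}(G)$, there exists $S \subseteq V(G)$ with $|S| = k$ and $\gamma_2(G - S) \ne \gamma_2(G)$; I will use this same $S$ as a vertex-deletion set in $G \vee H$.

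The first step is the structural identity $(G \vee H) - S = (G - S) \vee H$, which is immediate from $S \subseteq V(G)$: removing $S$ kills only vertices of the $G$-side together with their incident edges (including the cross-edges emanating from $S$ into $V(H)$), and the remaining cross-edges between $V(G) \setminus S$ and $V(H)$ are precisely those that make $(G - S) \vee H$ a join. Applying Theorem~\ref{join2dom} to both graphs then yields
\[
\gamma_2\bigl((G \vee H) - S\bigr) = \min\{\gamma_2(G - S),\, \gamma_2(H)\}, \qquad \gamma_2(G \vee H) = \min\{\gamma_2(G),\, \gamma_2(H)\}.
\]

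The main work is the final step: converting the inequality $\gamma_2(G - S) \ne \gamma_2(G)$ into $\min\{\gamma_2(G - S), \gamma_2(H)\} \ne \min\{\gamma_2(G), \gamma_2(H)\}$. The easy case is when $\gamma_2(H)$ is at least as large as both $\gamma_2(G)$ and $\gamma_2(G - S)$, since then both minima track the $G$-side and a change in the latter is a change in the former. I expect the delicate case to be the main obstacle: when $\gamma_2(H)$ lies at or below $\gamma_2(G)$, the outer $\min$ can mask an \emph{increase} on the $G$-side. To handle it, I would insist on choosing $S$ so that $\gamma_2(G - S) < \gamma_2(G)$ whenever this is attainable with $k$ deletions, which forces $\min\{\gamma_2(G - S), \gamma_2(H)\} \le \gamma_2(G - S) < \gamma_2(G \vee H)$, and invoke the symmetric argument on the $H$-side in the remaining subcase. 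Once this selection is justified, the definition of stability delivers $st_{\gamma_2}(G \vee H) \le k = \min\{st_{\gamma_2}(G),\, st_{\gamma_2}(H)\}$ at once.
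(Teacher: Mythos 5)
You have correctly isolated the real difficulty --- the outer $\min$ in $\gamma_2(G\vee H)=\min\{\gamma_2(G),\gamma_2(H)\}$ can mask a change on the $G$-side --- but your proposed repair does not close the gap. The chain $\min\{\gamma_2(G-S),\gamma_2(H)\}\le\gamma_2(G-S)<\gamma_2(G\vee H)$ is invalid: from $\gamma_2(G-S)<\gamma_2(G)$ you do not get $\gamma_2(G-S)<\min\{\gamma_2(G),\gamma_2(H)\}$, since $\gamma_2(H)$ may be at most $\gamma_2(G-S)$, in which case the minimum is unchanged. Moreover, a decreasing set $S$ of size $k$ need not exist at all: vertex deletion can strictly increase $\gamma_2$ (deleting the centre of the friendship graph $F_n$ raises $\gamma_2$ from $n+1$ to $2n$), and in that situation your ``remaining subcase'' cannot be handed off to $H$, because $st_{\gamma_2}(H)$ may be strictly larger than $k$.

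In fact no argument can succeed, because the statement is false. Take $G=F_n$ with $n\ge2$ and $H=K_m$ with $m\ge3$; the paper's own computations give $st_{\gamma_2}(F_n)=1$ and $st_{\gamma_2}(K_m)=m-1$, so the claimed upper bound is $1$. But $\gamma_2(F_n\vee K_m)=2$, since any two vertices of the $K_m$-side $2$-dominate everything, and after deleting any single vertex both sides still have at least two vertices, so the resulting graph still has $2$-domination number exactly $2$ (it is at least $2$ for any graph on at least two vertices, and two vertices of the complete side still form a $2$-dominating set). Hence $st_{\gamma_2}(F_n\vee K_m)\ge2>1$. This is precisely the masking phenomenon you flagged. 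For comparison, the paper offers no proof at all --- the theorem is asserted as an immediate consequence of Theorem~\ref{join2dom} --- so your analysis, while it cannot be completed, is the more careful of the two; the honest conclusion is that the inequality needs an additional hypothesis (for instance, a condition forcing the factor of smaller stability to also realize the minimum of the $2$-domination numbers before and after deletion) rather than a cleverer choice of $S$.
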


Here, we recall the definition of lexicographic product of two graphs.  
For two graphs $G$ and $H$, let $G[H]$ be the graph with vertex
set $V(G)\times V(H)$ and such that vertex $(a,x)$ is adjacent to vertex $(b,y)$ if and only if
$a$ is adjacent to $b$ (in $G$) or $a=b$ and $x$ is adjacent to $y$ (in $H$). The graph $G[H]$ is the
lexicographic product (or composition) of $G$ and $H$, and can be thought of as the graph arising from $G$ and $H$ by substituting a copy of $H$ for every vertex of $G$ \cite{DAM1}.

The following theorem gives the $2$-domination number of $G[H]$.

\begin{theorem}\label{T44}
	If $G$ and $H$ are two nonempty graphs. Then 
	\[
 \gamma_2(G[H]) \leq |V(H)| \cdot \gamma_2(G).
	\]
\end{theorem}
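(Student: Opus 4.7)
The plan is to exhibit an explicit 2-dominating set of $G[H]$ of cardinality $|V(H)| \cdot \gamma_2(G)$, built by lifting a minimum 2-dominating set of $G$ along all of $H$. The construction is the natural one suggested by the ``substitute a copy of $H$ for every vertex of $G$'' description of the lexicographic product.

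Concretely, I would first fix a minimum 2-dominating set $D \subseteq V(G)$, so $|D| = \gamma_2(G)$, and then set
\[
D' \;=\; D \times V(H) \;=\; \{(a,x) : a \in D,\ x \in V(H)\} \;\subseteq\; V(G[H]).
\]
Clearly $|D'| = |D| \cdot |V(H)| = |V(H)| \cdot \gamma_2(G)$, so it remains only to check that $D'$ is 2-dominating in $G[H]$.

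For this, pick any $(b,y) \in V(G[H]) \setminus D'$. Since $D'$ contains every pair whose first coordinate lies in $D$, the condition $(b,y) \notin D'$ forces $b \notin D$. Because $D$ is a 2-dominating set of $G$, the vertex $b$ has two distinct neighbors $a_1, a_2 \in D$ in $G$. By definition of the lexicographic product, whenever $a$ is adjacent to $b$ in $G$, the vertex $(a,x)$ is adjacent to $(b,y)$ in $G[H]$ for every $x \in V(H)$. In particular, $(a_1,y)$ and $(a_2,y)$ are two distinct neighbors of $(b,y)$ lying in $D'$, establishing that $|N((b,y)) \cap D'| \geq 2$. Hence $D'$ is a 2-dominating set and the bound follows.

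There is essentially no main obstacle here: the only thing to verify is the adjacency criterion in $G[H]$, and the construction is designed so that a single coordinate $a \in D$ adjacent to $b$ in $G$ already produces $|V(H)|$ neighbors of $(b,y)$ in $D'$ (far more than the two required). The only slightly subtle point worth noting explicitly is that the bound is never tight in general, because one could often afford to drop most copies of $H$ from each ``dominating column''; but for the inequality as stated, the trivial lift suffices.
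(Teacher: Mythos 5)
Your proposal is correct and follows essentially the same route as the paper: lift a minimum $2$-dominating set $D$ of $G$ to $D \times V(H)$ and use the adjacency rule of the lexicographic product to find two neighbors of any outside vertex inside the lifted set. The only cosmetic difference is that you name the specific witnesses $(a_1,y)$ and $(a_2,y)$, whereas the paper observes that all of the copies $H_{a_1}$ and $H_{a_2}$ are available; both verifications are equivalent.
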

\begin{proof}
	Let $D_G$ be a minimum 2-dominating set of $G$ of size $\gamma_2(G)$. Consider the lexicographic product $G[H]$. 
	Construct the set 
	$D = D_G \times V(H),$
	which includes all vertices in the copies of $H$ corresponding to vertices in $D_G$. Since $|D| = |V(H)| \cdot \gamma_2(G)$, we only need to check that $D$ is 2-dominating in $G[H]$.
	
	For any vertex $(x,y) \in V(G[H])$, if $x \in D_G$ then $(x,y) \in D$. Otherwise, since $D_G$ is 2-dominating in $G$, $x$ has at least two neighbors in $D_G$, say $g_1$ and $g_2$. Then $(x,y)$ is adjacent to every vertex in the copies $H_{g_1}$ and $H_{g_2}$, which are subsets of $D$. Thus, $(x,y)$ has at least two neighbors in $D$.
	Hence, $D$ is a $2$-dominating set of $G[H]$ and
	\[
	\gamma_2(G[H]) \leq |D| = |V(H)| \cdot \gamma_2(G).
	\]\qed
\end{proof}

By Theorem \ref{T44}, we have the following result.
 	
 	\begin{corollary}
 		Let $G$ and $H$ be two nonempty graphs. Then
 		\[
 		st_{\gamma_2}(G[H]) =
 		\begin{cases}
 		st_{\gamma_2}(G), & \text{if } G \text{ has no isolated vertex}, \\
 		\min\{st_{\gamma_2}(G), st_{\gamma_2}(H)\}, & \text{if } G \text{ has at least one isolated vertex}.
 		\end{cases}
 		\]
 	\end{corollary}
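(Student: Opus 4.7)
The plan is to verify the claimed equality by establishing matching upper and lower bounds in each of the two cases, leveraging Theorem \ref{T44} and the structural property that in $G[H]$ every vertex of a copy $H_v$ is adjacent to every vertex of every copy $H_u$ with $uv\in E(G)$. Throughout, I will refer to the subgraph induced by $\{v\}\times V(H)$ as the \emph{copy} $H_v$, and rely on the observation that any 2-dominating set of $G[H]$ projects in a natural way to a 2-dominating structure in $G$.

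For the case when $G$ has no isolated vertex, I would first prove $st_{\gamma_2}(G[H])\le st_{\gamma_2}(G)$ by choosing a minimum set $S=\{v_1,\dots,v_k\}\subseteq V(G)$ witnessing $st_{\gamma_2}(G)=k$, and then selecting one representative from each copy $H_{v_i}$ to form a $k$-vertex subset of $G[H]$ whose removal transfers the change in $\gamma_2(G)$ to a change in $\gamma_2(G[H])$; here the key tool is Theorem \ref{T44} together with the fact that, since every $v\in V(G)$ has a neighbor, 2-dominating sets of $G[H]$ of the form $D_G\times V(H)$ remain available after such a removal. For the reverse inequality, I would argue that removing fewer than $st_{\gamma_2}(G)$ vertices from $G[H]$ touches at most $st_{\gamma_2}(G)-1$ distinct copies, so the projection of the removed set to $V(G)$ induces a perturbation of $G$ that does not alter $\gamma_2(G)$; combined with Theorem \ref{T44}, a set of the form $D_G\times V(H)$ (possibly adjusted inside the touched copies) still realizes the original value of $\gamma_2(G[H])$.

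The second case exploits the fact that an isolated vertex $v_0$ of $G$ yields a copy $H_{v_0}$ that is a connected component of $G[H]$, giving the decomposition $\gamma_2(G[H])=\gamma_2(H)+\gamma_2(G[H]-V(H_{v_0}))$. Any minimum vertex set whose removal alters $\gamma_2(G[H])$ must therefore either alter $\gamma_2$ of the $H_{v_0}$-component at a cost of $st_{\gamma_2}(H)$, or alter $\gamma_2$ of the remaining subgraph at a cost governed by $st_{\gamma_2}(G)$ through the argument of the previous paragraph applied to $G-v_0$. Taking the minimum yields the stated formula.

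The hard part, I expect, will be the lower bound in the first case: ruling out all possible interactions between partial removals inside several copies of $H$ and the 2-domination structure of $G[H]$. In particular, I must control the scenario where a single well-chosen removal could break a minimum 2-dominating set of $G[H]$ not of the canonical form $D_G\times V(H)$, potentially causing $\gamma_2(G[H])$ to drop even when the corresponding change in $G$ requires more than one vertex deletion. Handling this cleanly requires a careful case analysis that exploits the no-isolated-vertex condition to ensure every copy $H_v$ receives 2-domination from at least two adjacent copies, so that small perturbations cannot prematurely alter $\gamma_2(G[H])$.
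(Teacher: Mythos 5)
Your outline cannot be completed, because the statement itself is false. Take $G=H=K_2$ (both nonempty, and $G$ has no isolated vertex). Then $G[H]=K_4$, and by the paper's own Proposition on complete graphs, $st_{\gamma_2}(K_4)=3$, whereas $st_{\gamma_2}(K_2)=1$ (since $\gamma_2(K_2)=2$ but $\gamma_2(K_1)=1$). More generally $K_m[K_p]=K_{mp}$ gives $st_{\gamma_2}(G[H])=mp-1$ while the claimed value is $m-1$. So the first case of the corollary fails, and no amount of care in the "hard part" you flag (the lower bound) will save it: it is the upper bound $st_{\gamma_2}(G[H])\le st_{\gamma_2}(G)$ that is already wrong. (For what it is worth, the paper offers no proof of this corollary at all; it is merely asserted to follow from Theorem \ref{T44}.)

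The failure is visible in your first step. You propose to witness $st_{\gamma_2}(G)=k$ by a set $S=\{v_1,\dots,v_k\}\subseteq V(G)$ and then delete one representative from each copy $H_{v_i}$, claiming this "transfers" the change in $\gamma_2(G)$ to a change in $\gamma_2(G[H])$. But deleting one vertex per copy does not produce $(G-S)[H]$; each copy $H_{v_i}$ still has $|V(H)|-1$ vertices left, and in general $\gamma_2$ of the resulting graph is unchanged (in the $K_4$ example, deleting one vertex leaves $K_3$ with $\gamma_2=2=\gamma_2(K_4)$). The deeper issue is that your whole plan treats $\gamma_2(G[H])$ as if it were determined by the canonical sets $D_G\times V(H)$ of Theorem \ref{T44}, but that theorem gives only an upper bound, which is typically far from tight: minimum $2$-dominating sets of $G[H]$ need not be unions of whole copies (e.g.\ two vertices suffice in $K_m[K_p]$, versus the bound $2p$). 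Without an exact formula for $\gamma_2(G[H])$ in terms of $G$ and $H$, neither direction of the claimed equality can be argued this way, and, as the counterexample shows, the equality does not hold anyway. Your analysis of the isolated-vertex case via the component decomposition is the one sound idea here, but even there the contribution of the non-isolated part is governed by $st_{\gamma_2}$ of the corresponding lexicographic product, not by $st_{\gamma_2}(G)$.
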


Now, we obtain the stability of $2$-domination number of corona of two graphs. We first state and prove the following theorem.

\begin{theorem}{\rm \cite{M1}}\label{corona1}
Suppose that $G$ is a graph of order $n$ and $H$ is any graph with no universal vertex. Then, $\gamma_2(G\circ H) =|V(G)|+\gamma_2(H)$.
\end{theorem}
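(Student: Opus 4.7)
The plan is to establish both directions of the equality $\gamma_2(G \circ H) = |V(G)| + \gamma_2(H)$. Throughout, write $V(G) = \{g_1, \ldots, g_n\}$ and let $H_1, \ldots, H_n$ denote the $n$ disjoint copies of $H$ in $G \circ H$, with $g_i$ joined to every vertex of $V(H_i)$.

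For the upper bound $\gamma_2(G \circ H) \leq n + \gamma_2(H)$, I would construct an explicit 2-dominating set of the claimed size. The natural approach takes all of $V(G)$ as part of $D$, so that every vertex $v \in V(H_i)$ automatically has $g_i \in D$ as one neighbor in $D$; one then supplements $D$ with vertices inside the copies of $H$ chosen so that each remaining $v$ acquires its required second $D$-neighbor, with the total supplement having cardinality $\gamma_2(H)$. The hypothesis that $H$ has no universal vertex should enter when verifying that this construction is tight, and a straightforward check of the 2-domination condition for all vertices outside $D$ completes this direction.

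For the lower bound $\gamma_2(G \circ H) \geq n + \gamma_2(H)$, I would take any minimum 2-dominating set $D^*$ of $G \circ H$ and analyze $D^* \cap V(G)$ together with $D_i^* = D^* \cap V(H_i)$. The key case split is whether $g_i \in D^*$: when $g_i \in D^*$, the set $D_i^*$ need only dominate $V(H_i) \setminus D_i^*$; when $g_i \notin D^*$, every vertex of $V(H_i) \setminus D^*$ must acquire both of its $D^*$-neighbors inside $V(H_i)$, forcing $|D_i^*| \geq \gamma_2(H)$. The absence of a universal vertex in $H$ is invoked here to rule out degenerate small configurations, and summing these per-copy contributions together with the $V(G)$-count should recover $|D^*| \geq n + \gamma_2(H)$.

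The main obstacle, I expect, is the lower bound: carefully tracking the trade-off between including $g_i$ in $D^*$ (cheap per copy) and excluding it (forcing $\gamma_2(H)$ vertices from $V(H_i)$), and showing that the total always reaches $n + \gamma_2(H)$ regardless of how this choice is made for each $i$. The no-universal-vertex hypothesis must be applied precisely where it eliminates the most economical configurations; once that bookkeeping is done, the explicit construction from the upper bound closes the argument.
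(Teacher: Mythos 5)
Your plan cannot be carried out, and the obstruction is not in your bookkeeping: the identity $\gamma_2(G\circ H)=|V(G)|+\gamma_2(H)$ is false for the standard corona. In $G\circ H$ a vertex $v$ of the $i$-th copy $H_i$ is adjacent only to $g_i$ and to its neighbours inside $H_i$. So in your upper-bound construction, where $D\supseteq V(G)$, every vertex of $V(H_i)\setminus D$ gets exactly one guaranteed $D$-neighbour ($g_i$) and must obtain its second one from $D\cap V(H_i)$; hence $D\cap V(H_i)$ must dominate $H_i$ for \emph{every} $i$, and your supplement has size at least $n\,\gamma(H)$, not $\gamma_2(H)$. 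The bound $n+\gamma_2(H)$ is genuinely unattainable once $n\geq 2$: for instance $\gamma_2(K_2\circ P_4)=6$, whereas the formula predicts $2+\gamma_2(P_4)=5$ (a short case check on whether each $g_i$ lies in $D$ shows every $2$-dominating set of $K_2\circ P_4$ has at least $6$ vertices). The lower bound fails as well: for $G=K_1$ and $H=C_4$ (which has no universal vertex), $G\circ H$ is the wheel on five vertices and $\{v_1,v_3\}$, two opposite rim vertices, is a $2$-dominating set, so $\gamma_2(K_1\circ C_4)=2<1+\gamma_2(C_4)=3$. Your own case split, carried out honestly, yields a per-copy cost of $\min\{1+\gamma(H),\,\gamma_2(H)\}$ (root plus a dominating set of $H_i$, versus a $2$-dominating set of $H_i$ with the root omitted), so the correct value is $n\cdot\min\{1+\gamma(H),\gamma_2(H)\}$ up to minor provisos about $g_i$ itself being $2$-dominated; this coincides with $n+\gamma_2(H)$ only in degenerate situations. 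No use of the no-universal-vertex hypothesis can rescue the claimed formula.

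For comparison, the paper's own proof is precisely the construction you propose for the upper bound --- $V(G)$ together with a minimum $2$-dominating set of $H$ ``in one copy'' --- followed by the bare assertion that minimality is obvious. It founders on exactly the point identified above: that set leaves every vertex of the other $n-1$ copies with only one neighbour in $D$, so it is not even $2$-dominating. You were right to flag the lower bound as the place requiring careful accounting; the accounting simply leads to a different (and correct) formula rather than to the stated one.
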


\begin{proof}
	Take the vertex set $V(G)$ together with a minimum $2$-dominating set of $H$ in one copy. This forms a $2$-dominating set for $G \circ H$. Obviously this set is a $2$-dominating set with minimum size. So we have the result.\qed	
\end{proof}

\begin{remark}
	 If $H$ contains a universal vertex (i.e., a vertex adjacent to all others in $H$), then $\gamma_2(G \circ H) = n$.
\end{remark}

By Theorem \ref{corona1}, we have the following result.

\begin{corollary}
	If  $G$ and $H$ are two  graphs, then $st_{\gamma_2}(G\circ H)=1 $. 
\end{corollary}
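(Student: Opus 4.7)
The plan is to exhibit a single vertex of $G \circ H$ whose removal changes $\gamma_2$; since the stability is at least $1$ by definition, this will suffice to conclude that $st_{\gamma_2}(G \circ H) = 1$. The most natural candidate is an arbitrary vertex $v$ of $G$. In the corona, the copy $H^{(v)}$ of $H$ attached at $v$ is joined to the rest of the graph only through $v$, so deleting $v$ cleaves $G \circ H$ into two pieces: $G \circ H - v$ is the disjoint union of $(G - v) \circ H$ and a free copy of $H$.

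Next I would apply Theorem \ref{corona1} twice (to $G \circ H$ and to $(G - v) \circ H$) and combine it with the trivial additivity of $\gamma_2$ over disjoint components. This yields
\[
\gamma_2(G \circ H - v) \;=\; \gamma_2((G - v) \circ H) + \gamma_2(H) \;=\; \bigl((n-1) + \gamma_2(H)\bigr) + \gamma_2(H),
\]
whereas $\gamma_2(G \circ H) = n + \gamma_2(H)$. The two values differ by $\gamma_2(H) - 1$. Since the hypothesis of Theorem \ref{corona1} forces $|V(H)| \geq 2$, and any graph on at least two vertices has $\gamma_2 \geq 2$ (a vertex outside a $2$-dominating set requires two neighbors inside it), we have $\gamma_2(H) - 1 \geq 1$, so the $2$-domination number genuinely changes and the stability is exactly $1$.

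The main obstacle is the case in which $H$ contains a universal vertex, where Theorem \ref{corona1} does not apply directly and one must instead invoke the Remark preceding the corollary: it gives $\gamma_2(G \circ H) = n$ and, by the same reasoning, $\gamma_2((G - v) \circ H) = n - 1$. Combined with the disjoint-union additivity, $\gamma_2(G \circ H - v) = (n - 1) + \gamma_2(H)$, which differs from $n$ as long as $\gamma_2(H) \geq 2$, i.e., as long as $H$ has at least two vertices. The sole degenerate configuration in which the two numbers would coincide is $H = K_1$; there the corona is simply $G$ with a pendant attached at each vertex, and one instead removes a pendant (rather than a vertex of $G$) and argues separately, using that after this removal a vertex of $G$ no longer has a forced contributor to every $2$-dominating set.
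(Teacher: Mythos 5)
Your treatment of the main case is considerably more concrete than the paper's own proof, which only asserts that removing \emph{any} vertex ``either disconnects a root vertex or breaks the $2$-dominating set in a copy of $H$'' without naming a vertex or computing a single value. Your route --- delete a root vertex $v$, observe $G\circ H-v\cong\bigl((G-v)\circ H\bigr)\sqcup H$, use additivity of $\gamma_2$ over components, and apply Theorem \ref{corona1} twice --- does produce a genuine change of $\gamma_2(H)-1\ge 1$ whenever $H$ has no universal vertex, and that part is sound modulo Theorem \ref{corona1} itself. (A small slip: when $G=K_1$ the value $\gamma_2((G-v)\circ H)$ is $0$, not $(n-1)+\gamma_2(H)$, though the conclusion survives since the difference is then $1$.)

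The genuine gap is exactly where you sensed trouble. The Remark you lean on for the universal-vertex case is not consistent with a direct count: if $w$ is universal in $H$ and $|V(H)|\ge 2$, every block $\{v\}\cup V(H^{(v)})$ must contribute at least two vertices to any $2$-dominating set, and $V(G)\cup\{w^{(v)}:v\in V(G)\}$ achieves this, so $\gamma_2(G\circ H)=2n$ rather than $n$. With the correct value your difference becomes $\gamma_2(H)-2$, which vanishes whenever $\gamma_2(H)=2$, and in that regime the corollary itself fails: $K_1\circ K_2=K_3$ has $st_{\gamma_2}(K_3)=2$ by the paper's own proposition $st_{\gamma_2}(K_n)=n-1$, so no alternative choice of vertex can rescue the argument. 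The deferred case $H=K_1$ is likewise not fixable rather than merely unfinished: $K_2\circ K_1=P_4$ and the paper gives $st_{\gamma_2}(P_4)=3$. In short, your argument proves the statement only under the hypothesis of Theorem \ref{corona1} (that $H$ has no universal vertex); for arbitrary $H$ the two cases you set aside are actual counterexamples, not loose ends.
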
 

\begin{proof} 	By Theorem \ref{corona1}, $\gamma_2(G\circ H) =|V(G)|+\gamma_2(H)$, so removing any vertex from the corona product either disconnects a root vertex in $G$ or breaks the $2$-dominating set in a copy of $H$, thus changing $\gamma_2(G \circ H)$. Hence, we have the result. \qed
\end{proof}

\section{Conclusion}
	This paper introduces the concept of the stability of $2$-domination number of graphs and explores various properties related to its number. We have determined the precise values of stability of $2$-domination number for specific graphs. There is much work to be done in this area.
	\begin{enumerate}
		\item Define the edge stability of  $2$-domination number and study its properties. 
		\item What is the stability of $2$-domination number of  natural and fractional powers of a graph?

		\item Study the complexity of the stability of $2$-domination number  for many of the graphs.  
		
	\end{enumerate}

\end{document}